\tiny\color{gray},
\theoremstyle{definition} 
\newtheorem{theorem}{Theorem}[section]
\newtheorem{corollary}{Corollary}[theorem]
\newtheorem{example}{Example}
\newtheorem{lemma}[theorem]{Lemma}
\newtheorem{conjecture}[theorem]{Conjecture}
\newtheorem{guess}[theorem]{Guess}
\theoremstyle{definition}
\theoremstyle{remark}
\newcommand{\NN}{\mathbb{N}}
\newcommand{\ZZ}{\mathbb{Z}}
\title{A symbolic computational approach to the generalized gambler's ruin problem in one and two dimensions}
\author[Martinez]{Lucy Martinez}
\address[L.~Martinez]{Department of Mathematics, Rutgers University, Piscataway, NJ 08854}
\email{\textcolor{blue}{\href{mailto:lucy.martinez@rutgers.edu}{lucy.martinez@rutgers.edu}}}
\begin{document}
\begin{abstract}
The power of symbolic computation, as opposed
to mere numerical computation, is illustrated with
efficient algorithms for studying the generalized gambler's ruin problem in one and two dimensions. We also consider a new generalization of the classical gambler's ruin where we add a third step which we call the mirror step. In this scenario, we provide closed formulas for the probability and expected duration.
\end{abstract}

\maketitle

\section{Introduction} 
Throughout we let $x$ be some positive integer such that $0< x < N$ where $N\in \NN=\{1,2,3,\ldots \}$. Consider a gambler who starts with $x$ dollars. At each gamble, the gambler either wins a dollar with probability $\frac{1}{2}$ or loses a dollar with probability $\frac{1}{2}$. The gambler's goal is to reach $N$ dollars without first running out of money (i.e., hitting $0$ dollars). If the gambler reaches $N$ dollars, we say that they are a \textit{winner}. The gambler continues to play until they either run out of money or win. This scenario is known as the \textit{gambler's ruin problem}, first posed by Pascal in 1656 in a letter to Fermat, as noted by Edwards \cite{Edwards1983}. In 1657, Christiaan Huygens restated the problem and published a solution for the probability of winning~\cite{Huygens}. For additional historical context, we refer the reader to a paper of Seongjoo Song and Jongwoo Song~\cite{Song}.

In this paper, we begin by providing an overview of the classical gambler's ruin problem, recalling results for both the probability of winning and the expected duration of the game. We also summarize analogous results on the generalized $1$-dimensional and $2$-dimensional versions of the gambler's ruin problem. Building on the $1$-dimensional version, we introduce a new generalization of the classical gambler's ruin game that includes an additional step called the \textit{mirror step}. For the generalized model, we derive formulas for the probability of winning and expected duration of the game. The objective of this paper is to propose an approach to reduce the computational running time required to determine the probability of winning and the expected duration specifically for the generalized 1-dimensional and 2-dimensional versions.

\subsection{Classical gambler's ruin problem}\label{subsec:classical}
Let $f(x)$ be the probability that the gambler exits the game as a winner starting with $x$ dollars. For $0<x<N$, this probability satisfies the recurrence relation
\begin{equation} \label{eq:probclass}
    f(x)=\frac{1}{2}f(x-1)+\frac{1}{2}f(x+1), \quad f(0)=0, f(N)=1.
\end{equation}
That is, if the gambler starts with $x$ dollars, then in the next round, the gambler has $x-1$ dollars or $x+1$ dollars, each with probability $\frac{1}{2}$. Using this recurrence relation and the boundary conditions, we can find the solution to be $f(x)=\frac{x}{N}$.

If the gambler starts with $x$ dollars, let $g(x)$ be the expected number of steps (expected duration of the game) the gambler takes to exit the game (either with $N$ dollars or $0$ dollars). Similar to the probability, for $1<x<N$, \[g(x)=\frac{1}{2}g(x-1)+\frac{1}{2}g(x+1) +1, \quad g(0)=0, g(N)=0.\] At each round, if the gambler has $x$ dollars, then in the next round, the gambler will have either $x-1$ dollars or $x+1$ dollars, each with probability $\frac{1}{2}$. However, we add 1 to the count since the gambler has taken one extra step. Using the recurrence relation and the boundary conditions, we can find the solution to be $g(x)=x(N-x)$.

Building upon the expected duration, we can obtain the probability generating function of the duration of the game. For a formal variable $t$ and $0<x<N$, \[F(x,t)=t\left(\frac{1}{2}F(x-1,t)+\frac{1}{2}F(x+1,t)\right), \quad F(0,t)=1, F(N,t)=1.\]
Taking the derivative of $F(x,t)$ with respect to $t$, and evaluating at $t=1$ recovers the expected duration of the game at $x$.

Consider extending the game so that the probability of losing one dollar or winning one dollar are not the same. In other words, let $p$ be the probability of winning one dollar, and $q=1-p$ be the probability of losing one dollar. Let $f(x)$ be the probability of exiting the game as a winner starting with $x$ dollars. Similarly to \Cref{eq:probclass}, we get
\[ f(x)=qf(x-1)+pf(x+1), \quad f(0)=0, f(N)=1, \text{ and } p+q=1. \]

Edwards gives a conjecture on how Pascal solved the above using a method of recursive formula \cite{Edwards1983}. We provide Edwards' solution to $f(x)$. Rewrite $f(x+1)-f(x)$ and observe the following

\begin{equation*}
    f(x+1)-f(x)=\frac{q}{p}\left(f(x)-f(x-1)\right)=\frac{q^2}{p^2}\left(f(x-1)-f(x-2)\right)=\cdots= \frac{q^i}{p^i}\left(f(1)-f(0)\right).
\end{equation*}

Hence, 

\begin{align*}
    f(x)&= \left ( \sum_{j=0}^{x-1}\left(\frac{q}{p}\right)^j \right ) f(1).
\end{align*}

Since $p=1-q$ then $p\neq q$, it follows that $\frac{q}{p}\neq 1$. Therefore, by the geometric series, we obtain

\begin{align*}
    1=f(N)&=\frac{1-\left(\frac{q}{p}\right)^N}{1-\frac{q}{p}} \cdot f(1).
\end{align*}

Thus, we can recover the following

\begin{align*}
 f(1)=\frac{1-\frac{q}{p}}{1-\left(\frac{q}{p}\right)^N}
\quad \text{and} \quad
 f(x)=\frac{1-\left(\frac{q}{p}\right)^x}{1-\left(\frac{q}{p}\right)^N}.
\end{align*}

\subsection{Generalized $1$-dimensional gambler's ruin}
The gambler's ruin problem can be formulated as follows: A particle starts at a point $x$ on a line of length $N$ where $0<x<N$. The particle moves to the left from $x$ to $x-1$ with probability $\frac{1}{2}$, or to the right from $x$ to $x+1$ with probability $\frac{1}{2}$.

Consider extending the 1-dimensional gambler's ruin game to include more than two steps on a line of length $N$. Let $r$ be a positive integer. Let $a_1,a_2,\ldots , a_r$ be distinct integers such that $a_1<a_2<\cdots<a_r$ where $a_1<0$ and $a_r>0$. Let $p_1, p_2,\ldots,p_r$ be probabilities such that $p_1+p_2+\ldots + p_r=1$, and let $P$  be the probability table $P=[[a_1,p_1], [a_2,p_2],\ldots , [a_r,p_r]]$ where each pair $[a_i,p_i]$ represents the outcome $a_i$ occurring with probability $p_i$. The generalized 1-dimensional gambler's ruin problem states that if a particle is currently at some $x$ then the particle moves from $x$ to $x+a_1$ with probability $p_1$, or moves from $x$ to $x+a_2$ with probability $p_2$, or moves from $x$ to $x+a_3$ with probability $p_3$, and so on.

Similarly to the method of solving a system of linear equations as in \Cref{subsec:classical}, we can obtain the probability of winning and the expected duration for the generalized case for any starting position. However, as $N$ grows, the computation time to solve a system of $N$ linear equations will be slower. In \Cref{sec:general 1d}, we present a faster method to reduce the computational running time by going from a system of $N-1$ linear equations with $N-1$ unknowns to a system of $a_r$ linear equations with $a_r$ unknowns where $a_r$ is the maximum of the steps in the probability table $P=[[a_1,p_1], [a_2,p_2],\ldots , [a_r,p_r]]$. Our method significantly drops the computational running time and we make comparisons between the direct approach and our strategy in \Cref{subsec:time comparison in 1d}.

\subsection{$2$-dimensional gambler's ruin}\label{subsec:2 dim}
Let $M$ and $N$ be positive integers. Consider a particle starting at a point $(x,y)$ in the interior of a rectangular grid of size $M\times N$, where $0<x<M$ and $0<y<N$. At each step, the particle moves in one of four directions, each with probability $\frac{1}{4}$: $(x-1,y), (x,y+1), (x+1,y),(x,y-1)$. The particle stops moving once it hits one of the four boundaries, defined by $x=0, x=M, y=0$, or $y=N$. 
By setting up a recurrence relation for the  expected duration, we can obtain a system of $(M-1)\times(N-1)$ linear equations with $(M-1)\times(N-1)$ unknowns. Andrej Kmet and Marko Petkov\v{s}ek gave an explicit formula involving a double sum, enabling direct computation of the expected duration for the $2$-dimensional gambler's ruin game without the need to solve systems of equations or use recursion \cite{Kmet}. While Kmet and Petkov\v{s}ek's formula expresses the expected duration as a double sum, our method reduces the computational running time by going from a system of $(M-1)\times(N-1)$ linear equations with $(M-1)\times(N-1)$ unknowns to a system of $N-1$ linear equations and $N-1$ unknowns. Our method is significantly faster than the direct approach and Kmet and Petkov\v{s}ek' formula. We make comparisons in \Cref{subsec:time comparison in 2d}.

One way to generalize the $2$-dimensional game is to change the probabilities of each of the four directions with probabilities $p_L, p_U, p_R,$ and $p_B$, corresponding to left, up, right and down movements, respectively where $p_L+p_U+p_R+p_B=1$. Although one can generalize the number of steps for either of the four directions, we focus on the case when the set of steps the particle can
move is $\{[0,1],[0,-1],[1,0],[-1,0]\}$,
 and remark that one can adapt our strategy for an {\it arbitrary} (finite) set of allowed steps, and arbitrary probability distribution.

\subsection{A mirror step variant of gambler's ruin}\label{subsec:mirror}
We consider a new generalization of the gambler's ruin problem. A particle starts at some point $x$ on a line of length $N$ where $0< x <N$. At each step, the particle either moves from $x$ to $x-1$ with probability $q_1$, or moves from $x$ to $x+1$ with probability $q_2$, or moves from $x$ to $N-x$ with probability $p$ where $0<p<1$ and $q_1+q_2+p=1$. We call this last step the \textit{mirror step}. The particle continues to walk on the line until it reaches $0$ or $N$. In this paper, we focus on the case when $q_1=q_2=\frac{1-p}{2}$ and we call this the \textit{symmetric case}.

We begin with an example where the particle starts at $x=1$ and generate data for different $p$ values with fixed $N$.

\begin{example}\label{ex: x=1 prob in intro}
Let $N=100$ and $x=1$. We generate data for the probability that if the particle is currently at $x=1$, the particle eventually ends at $100$. Let $p\in \{\frac{1}{2},\frac{1}{3},\frac{1}{4},\ldots, \frac{1}{10}\}$.

We use the procedure \texttt{Lk(p,x,N)}, as described in \Cref{appendix:mirror} in Maple which generates the following data in about $4.765$ seconds:
\begin{align*}
    T \coloneqq &[0.4142135624, 0.3660254038, 0.3333333333, 0.3090169944, 0.2898979486, 0.2742918852, 0.2612038750, \\
    &0.2500000000,0.2402530734].
\end{align*}

\noindent The sequence $T$ reads as follows. If the particle is currently at $x=1$ and $p=\frac{1}{2}$, the particle moves from $x$ to $x-1$ with probability $\frac{1-p}{2}=\frac{1}{4}$, or moves from $x$ to $x+1$ with probability $\frac{1-p}{2}=\frac{1}{4}$, or moves from $x$ to $100-x$ with probability $p=\frac{1}{2}$. Then, the probability of the particle starting at $x=1$ and ending at $100$ is $T_1=0.4142135624$. Similarly, if the particle is currently at $x=1$ and $p=\frac{1}{3}$, the particle moves from $x$ to $x-1$ with probability $\frac{1-p}{2}=\frac{1}{3}$, or moves from $x$ to $x+1$ with probability $\frac{1-p}{2}=\frac{1}{3}$, or moves from $x$ to $100-x$ with probability $p=\frac{1}{3}$. Then, the probability of the particle starting at $x=1$ and ending at $100$ is $T_2=0.3660254038$. Thus, $T_i$ is the probability of the particle starting at $x=1$ and ending at $100$ for $p=\frac{1}{i+1}$ where $1\leq i\leq 9$.

We then use the function 
\texttt{identify} in Maple on the sequence $T$. The function \texttt{identify} is based, in part, on the continued fraction expansion of any given numerical value. Using this function on the values of $T$, we conjecture that each of the probabilities in $T$ converges to
\begin{align*}
    M\coloneqq&\left[\sqrt{2}-1, \frac{\sqrt{3}-1}{2}, \frac{1}{3}, \frac{\sqrt{5}-1}{4}, \frac{\sqrt{6}-1}{5}, \frac{\sqrt{7}-1}{6},\frac{2 \sqrt{2}-1}{7}, \frac{1}{4}, \frac{\sqrt{10}-1}{9} \right].
\end{align*}
That is, the probability of the particle starting at $x=1$ and ending at $100$ converges to $M_i$ for $p=\frac{1}{i+1}$ where $1\leq i\leq 9$.
\end{example}

The previous example illustrates that when the particle starts at $x=1$, the probability of ending at $N$ converges fast. Denote this probability by $f_N^{(p)}(x)$. We state the following guess for $x=1$ and in \Cref{cor:probability at infinity} we provide a proof.

\begin{guess}\label{guess:x=1}
If the particle starts at $x=1$, then
\[\lim_{N\to\infty}f_N^{(p)}(1)=\frac{\sqrt{p}-p}{1-p}.\]
\end{guess}

In \Cref{sec:mirror step}, we provide other expressions for $\lim_{N\to \infty} f_N^{(p)}(x)$ when $x=2$ and $x=N-2$ with fixed $N$, and in \Cref{cor:probability at infinity} we provide the general formula for the limit.

This paper is structured as follows. In Section \ref{sec:general 1d}, we present a new approach to compute the probability of winning and the expected duration of the game that reduces the computational running time for the generalized $1$-dimensional gambler's ruin \footnote{All computations were performed using Maple on a laptop with an Intel Core i7-10510U processor (4 cores, 8 logical processors) and 8 GB of RAM.}. In Section \ref{sec:2d case}, we provide the analogous approach for the generalized $2$-dimensional case and compare the computational running times to a formula provided by Andrej Kmet and Marko Petkov\v{s}ek. In Section \ref{sec:mirror step}, we consider a mirror step variant of gambler's ruin and provide closed formulas for both the probability of winning and the expected duration of the game. We conclude with future directions in Section \ref{sec:future}.

\section{Generalized $1$-dimensional Gambler's Ruin}\label{sec:general 1d}

In this section, we introduce the recurrence relation for the probability of winning in the generalized $1$-dimensional gambler's ruin game, and introduce symbolic variables to the recurrence equation of the probability and expected duration. Recall that $P=[[a_1,p_1], [a_2,p_2],\ldots , [a_r,p_r]]$ denotes a probability table, where each pair $[a_i,p_i]$ represents the outcome $a_i$ occurring with probability $p_i$. To set up notation, we start with an example.

\begin{example}\label{ex:probability notation}
Let $N=5$ and $P$ be the probability table given by $P=\left[[-2,\frac{1}{2}],[1,\frac{1}{4}],[2,\frac{1}{4}]\right]$. If the particle starts at some $x$ where $0<x<5$ on the line of length $5$, then it can move along the line as follows: from $x$ to $x-2$ with probability $\frac{1}{2}$, or from $x$ to $x+1$ with probability $\frac{1}{4}$, or from $x$ to $x+2$ with probability $\frac{1}{4}$.
\end{example}

\subsection{Probability}\label{subsec:prob}
We now establish the recurrence relation for the probability that the particle reaches some position $\geq N$ starting from an initial position $x$. We then rewrite this recurrence and introduce new variables for the probabilities at each $x$.

Define $f(x)$ as the probability that a particle starting at $x$ will eventually reach a position $\geq N$. For $0<x<N$, this probability satisfies the recurrence relation
\begin{equation}\label{eq:generalprob}
    f(x)=\sum_{i=1}^r p_if(x+a_i),
\end{equation}
where $a_i\in \ZZ$, $a_1<a_2<\cdots <a_r$, $a_1<0$ and $a_r>0$. Unlike the classical gambler’s ruin problem, the generalized 1-dimensional scenario has more than two boundary conditions. Certainly $f(0)=0$ and $f(N)=1$. Since $a_1< a_2< \cdots< a_r$, the values $a_1$ and $a_r$ represent the minimum and maximum of all the integers $a_i$, respectively. For any integer $k$ such that $a_{1}+1 \leq k \leq 0$, it follows that $f(k)=0$. Indeed, if the particle is at $x=1$, it may move to $x+a_1=a_1+1$ with probability $p_1$. Given that $a_1<0$, this movement brings the particle to some position $k\leq 0$, implying that $f(k)=0$ for $k=a_1+1, a_1+2, \ldots, -1,0$. Similarly, for any integer $\ell$ such that $N \leq \ell \leq N+a_r-1$, we have $f(\ell)=1$. If the particle is at $x=N-1$, it may move to $x+a_r=N+a_r-1$ with probability $p_r$. Since $a_r>0$, this movement brings the particle to some position $\ell\geq N$, so $f(\ell)=0$ for $\ell=N,N+1,N+a_r-2,N+a_r-1$. 
Thus, there are $a_r-a_1$ boundary conditions.

\begin{example}\label{ex:slow version N=5}(Continuing \Cref{ex:probability notation})
Recall the probability table $P=\left[[-2,\frac{1}{2}],[1,\frac{1}{4}],[2,\frac{1}{4}]\right]$ and $N=5$. If $f(x)$ denotes the probability that the particle reaches some position $\geq 5$, then
\begin{equation*}
    f(x)=\frac{1}{2}f(x-2)+\frac{1}{4}f(x+1)+\frac{1}{4}f(x+2)
\end{equation*}
with initial and final conditions $f(-1)=f(0)=0$ and $f(5)=f(6)=1$. 

\noindent This setup results in a system of 4 linear equations for $0<x<5$,
\begin{align*}
   f(1)&=\frac{1}{2}f(-1)+\frac{1}{4}f(2)+\frac{1}{4}f(3)\\
   f(2)&=\frac{1}{2}f(0)+\frac{1}{4}f(3)+\frac{1}{4}f(4)\\
   f(3)&=\frac{1}{2}f(1)+\frac{1}{4}f(4)+\frac{1}{4}f(5)\\
   f(4)&=\frac{1}{2}f(2)+\frac{1}{4}f(5)+\frac{1}{4}f(6).
\end{align*}
Solving for the unknowns using the boundary conditions yields $f(1)= \frac{1}{5}, f(2)=\frac{13}{45}, f(3)=\frac{23}{45}, f(4)=\frac{29}{45}$.
\end{example}

Although this direct method works for small values of $N$, solving the system of $N-1$ equations becomes computationally expensive as $N$ grows. To address this, we rewrite the recurrence relation \Cref{eq:generalprob} as
\begin{equation}\label{eq:bettergeneralprob}
f(x)=\frac{1}{p_r}f(x-a_r)-\frac{1}{p_r}\sum_{i=1}^{r-1}p_if(x+a_i-a_r)
\end{equation}
obtained by the change of variables $x\mapsto x-a_r$. The boundary conditions remain, $f(a_1+1)= f(a_1+2)= \ldots= f(-1)=f(0)=0$ and $f(N)=f(N+1)=\ldots = f(N+a_r-2)= f(N+a_r-1)=1$. For each $1\leq j \leq a_r$ define $d_j=f(j)$ and construct the set $S=\{d_1,d_2,\ldots, d_j\}$ where $a_r$ is the maximum of the $a_i$'s. Using these variables, we express 
\begin{align}\label{eq:linear equations for better recurrence}
    f(a_r+1),f(a_r+2),\ldots, f(N+a_r-1)
\end{align}
as linear combinations of the elements in $S$.

Observe that $a_1<a_2<\cdots<a_r$ implies $a_i-a_r<0$ for any $1\leq i \leq r-1$. Hence, $x+a_i-a_r<x\leq N+a_r-1$ for all $x\in \{a_r+1, a_r+2,\ldots, N+a_r-1\}$ since $0<a_r+1<a_r+2< \cdots< N+a_r-1$. Thus, $x+a_i-a_r< N+a_r-1$. Also, for any $1\leq i\leq r-1$, $x+a_i-a_r>x+a_1-a_r$ since $a_i> a_1$. Now, $x+a_i-a_r> x+a_1-a_r\geq a_r+1+a_1-a_r=a_1+1$ since $0<a_r+1<a_r+2<\cdots < N+a_r-1$. It follows that $x+a_i-a_r> a_1+1$.

Combining the above, we get $a_1+1<x+a_i-a_r<N+a_r-1$. This implies that $f(x+a_i-a_r)$ depends only on the following terms 
\begin{align*}
    &f(a_1+1),f(a_1+2),\ldots, f(0),\\
    &f(1), f(2), \ldots, f(a_r),\\
    &f(a_r+1),f(a_r+2),\ldots, f(N-1)\\
    &f(N), f(N+1), f(N+2), \ldots , f(N+a_r-1).
\end{align*}

We know $0=f(a_1+1)=f(a_1+2)=\cdots =f(0)$, $1=f(N)= f(N+1)=f(N+2)= \cdots =f(N+a_r-1)$ and $f(1)=d_1,f(2)=d_2,\ldots,f(j)=d_j$ where $j=a_r$. For any $a_r+1\leq x \leq N-1$,
$f(a_r+1)$ is a linear combination of $f(1)$ and $f(a_1+1), f(a_2)+1,\ldots, f(a_{r-1}+1)$. Recall that $a_1+1\leq a_2, a_2+1\leq a_3, \ldots, a_{r-1}+1\leq a_r$. Hence, $f(a_r+1)$ depends on at most the expression $f(a_r)$, which is known. Thus, $f(a_r+1)$ is a linear combination of the elements in $S$. Since $f(x)$ is a recursive formula, for any $x>a_r+1$, $f(x)$ will be a linear combination of the variables in $S$. Simultaneously, we have $f(N)=f(N+1)=\ldots =f(N+a_r-2)=f(N+a_r-1)=1$. Therefore, $f(N), f(N+1), f(N+2), \ldots , f(N+a_r-1)$ are linear combinations of $\{d_1,d_2,\ldots,d_j\}$ where $j=a_r$ and are all equal to $1$. Hence, solving this system of $a_r$ equations with $a_r$ unknowns yields solutions for the variables in $S$ which provide solutions for the rest of the expressions, namely $f(a_r+1),f(a_r+2),\ldots, f(N-1)$.

\begin{example}\label{ex:faster version N=5}(Continuing Example \ref{ex:slow version N=5})
Let $N=5$ and $P=[[-2,1/2],[1,1/4],[2,1/4]]$ as in \Cref{ex:slow version N=5}. Recall the boundary conditions: $f(-1)=f(0)=0$ and  $f(5)=f(6)=1$. Note that $a_3=2$, the maximum of $\{-2,1,2\}$, introduces the new variables $\{d_1,d_2\}$ such that $f(1)=d_1$. Using the recurrence relation  $f(x)=4f(x-2)-2f(x-4)-f(x-1)$, we construct the equations for $f(3), f(4) , f(5)$, and $f(6)$, with $r=3$:
\begin{align*}
    f(-1)&=f(0)=0 \\
    f(1)&=d_1\\
    f(2)&=d_2 \\
    f(3)&=4f(1)-2f(-1)-f(2)= 4d_1-d_2\\
    f(4)&=4f(2)-2f(0)-f(3)=5d_2-4d_1 \\
    f(5)&=4f(3)-2f(1)-f(4)= 18d_1-9d_1\\
    f(6)&=4f(4)-2f(2)-f(5)=27d_2-34d_1.
\end{align*}
Since $f(5)=f(6)=1$, the system of equations $1= 18d_1-9d_1$ and $1=27d_2-34d_1$ has the solution $d_1=\frac{1}{5}$ and $d_2=\frac{13}{45}$. Substituting these values gives  
\[ f(1)=\frac{1}{5}, f(2)=\frac{13}{45}, f(3)=\frac{23}{45}, f(4)=\frac{29}{45}.\]
These results agree with \Cref{ex:slow version N=5}.
\end{example}

\subsection{Expected duration}\label{subsec:exp}
In the previous subsection, we considered the probability for the particle to reach some position $\geq N$. In this section, we consider the expected duration for the particle to end at a position $\leq 0$ or a position $\geq N$. Since this is analogous to the probability case, we omit the details.

Define $g(x)$ as the expected number of steps that a particle starting at $x$ will eventually reach a position $\leq 0$ or a position $\geq N$. For $0<x<N$, this expected duration satisfies the recurrence relation
\begin{equation}\label{eq:generalexp}
    g(x)=\sum_{i=1}^r p_ig(x+a_i)+1,
\end{equation}
where $a_i\in \ZZ$, $a_1<a_2<\cdots <a_r$, $a_1<0$, $a_r>0$ and boundary conditions $0=g(a_1+1)= g(a_1+2)=\cdots=g(-1)=g(0)$ and $0=g(N)=g(N+1)=\cdots =g(N+a_r-2)=g(N+a_r-1)$.

Similar to the probability case, we rewrite \Cref{eq:generalexp} to 
\begin{equation}\label{eq:bettergeneralexp}
g(x)=\frac{1}{p_r}g(x-a_r)-\frac{1}{p_r}\sum_{i=1}^{r-1}p_ig(x+a_i-a_r)-\frac{1}{p_r}
\end{equation}
where $0=g(a_1+1)= g(a_1+2)=\cdots=g(-1)=g(0)$ and $0=g(N)=g(N+1)=\cdots =g(N+a_r-2)=g(N+a_r-1)$.

Using \Cref{eq:bettergeneralexp} reduces the computational running time for the expected duration of the game as we will see in the next subsection.

\subsection{Comparison between the slower method and the faster method}\label{subsec:time comparison in 1d}
In this subsection, we compare the computational running times in Maple between the classical approach to solving $N-1$ linear equations and the faster method introduced in the previous section. Specically, we evaluate the performance of \lstinline{GR1dLG} and \lstinline{NewGR1dLG}, as described in \Cref{appendix:ggr1d}, Using the commands \lstinline{time(GR1dLG)} and \lstinline{time(NewGR1dLG)}, we measure the execution time for varying values of $N$ with $P=[[-1,\frac{1}{3}],[1,\frac{1}{3}],[2,\frac{1}{3}]]$. The measure timed in Maple are summarized below:
\begin{table}[H]
\centering
\begin{tabular}{|c|c|c|c|}
\hline
$N$ & Slow Method & Faster Method\\
&(seconds) & (seconds)\\
\hline
$100$ & $0.796$ & $0.015$ \\ \hline
$105$ & $11.171$ & $0.015$ \\ \hline
$110$ & $21.093$ & $0.015$ \\ \hline
$115$ & $500.828$ & $0.015$ \\ \hline
$120$ & $2466.609$ & $0.046$ \\
\hline
\end{tabular}
\end{table}

The data in the table suggests that introducing symbolic variables in \Cref{eq:bettergeneralexp} dramatically reduces the running time for computing the expected durations of the game starting at all starting locations.

\subsection{Variance} In this subsection, we consider the variance of the duration of the generalized $1$-dimensional gambler's ruin game. The computation builds up the expected duration discussed in \Cref{subsec:exp}. 

Let $P=[[a_1,p_1],[a_2,p_2],\ldots,[a_r,p_r]]$ such that $p_1+\ldots+p_r=1$ and define $F(x,t)$ as the probability generating function of the duration of the generalized $1$-dimensional gambler's ruin game. For $0<x<N$, this functions satisfies the recurrence relation

\begin{equation*}
F(x,t)=t(p_1F(x+a_1,t)+p_2F(x+a_2,t)+\cdots+p_rF(x+a_r,t)),
\end{equation*}

where $F(0,t)=1$ and $F(N,t)=1$. Making the substitution $t\mapsto z+1$ yields

\begin{equation}\label{eq:1d gen fun in z}
F(x,z)=(z+1)(p_1F(x+a_1,z)+p_2F(x+a_2,z)+\cdots+p_rF(x+a_r,z),
\end{equation}

with $F(0,z)=1$ and $F(N,z)=1$. We derive an expression to estimate the second factorial moment. Expanding $F(x,z)$ as a Taylor series gives

\begin{equation}\label{eq:taylor in 1d}
    F(x,z)=1+g(x)z+\frac{h(x)}{2!}z^2+\cdots
\end{equation}

where $g(x)$ is the expected duration as defined as in \Cref{subsec:exp}, and $h(x)$ represents the second factorial moment at $x$. Substituting \Cref{eq:taylor in 1d} into \Cref{eq:1d gen fun in z} results in

\begin{align*}
    1+g(x)z+\frac{h(x)}{2}z^2 + \dots &=(1+z)\left( p_1\left(1+g(x+a_1)z+\frac{h(x+a_1)}{2}z^2 +\cdots \right)\right)\\
    &+(1+z)\left(p_2\left(1+g(x+a_2)z+\frac{h(x+a_2)}{2}z^2+\cdots\right)\right)\\
    &\quad \vdots\\
    &+(1+z)\left(p_r\left(1+g(x+a_r)z+\frac{h(x+a_r)}{2}z^2+\cdots\right)\right).
\end{align*}

\noindent From the previous expression, we extract the coefficient of $z^2$ to get an expression for $h(x)$. Hence,

\begin{equation*}
    h(x)-(p_1h(x+a_1)+p_2h(x+a_2)+\cdots + p_rh(x+a_r))=2(p_1g(x+a_1)+p_2g(x+a_2)+\cdots+p_rg(x+a_r))
\end{equation*}
where $0=g(a_1+1)= g(a_1+2)=\cdots=g(-1)=g(0)$ and $0=g(N)=g(N+1)=\cdots =g(N+a_r-2)=g(N+a_r-1)$. The sum $g(x)+h(x)$ gives the second moment for $0<x<N$. 

\noindent The variance at $x$, denoted $V(x)$, is computed as
\[ V(x)=g(x)+h(x)-(g(x))^2\]
where $g(x)$ and $h(x)$ are defined as follows

\begin{align*}
    &g(x)=\frac{1}{p_r}g(x-a_r)-\frac{1}{p_r}\sum_{i=1}^{r-1}p_ig(x+a_i-a_r)-\frac{1}{p_r}
    \intertext{and}
    &h(x)-\sum_{i=1}^{r}p_ih(x+a_i)=2\sum_{i=1}^rp_ig(x+a_i).
\end{align*} 

We conclude this section with a table comparing the expected duration and the standard deviation when the particle starts at $x=\frac{N}{2}$ for various $N$ and probability table $P=[[-2,\frac{1}{3}],[1,\frac{1}{3}],[2,\frac{1}{3}]]$ .

\begin{table}[H]
\centering
\begin{tabular}{|c|c|c|c|}
\hline
$N$ & $x$ & Expected Duration& Standard Deviation \\
\hline
$10$ & $5$ & $ 8.613479400$ & $ 6.321808669$\\\hline
$20$ & $10$ & $  25.23344696$ & $18.44137538$\\ \hline
$30$ & $15$ & $42.94261730$ & $ 29.22243692$\\ \hline
$40$ & $20$ & $59.58246747$ & $ 37.26482832$ \\ \hline
$50$ & $25$ & $75.36543964$ & $ 43.30155080$\\ \hline
$60$ & $30$ & $90.70157954$ & $48.13687964$\\ \hline
$70$ & $35$ & $105.8379590$ & $52.27336865 $\\ \hline
$80$ & $40$ & $120.8913756$ & $55.98253147$\\ \hline
$90$ & $45$ & $135.9117966$ & $59.40593597$\\ \hline
$100$ & $50$ & $150.9194653$ & $62.61931702$ \\ \hline
\end{tabular}
\end{table}

\section{$2$-Dimensional Gambler's Ruin}\label{sec:2d case}

In this section, we consider the $2$-dimensional gambler's ruin and provide analogous results to the probability, expected duration and variance discussed in \Cref{sec:general 1d}. We introduce symbolic variables to the recurrence equation of the probability and expected duration. We then compare the computational running times in Maple between our method and the formula provided by Kmet and Petkov\v{s}ek. We conclude this section with an analysis of the variance.

We begin by recalling the setup for the $2$-dimensional case as described in \Cref{subsec:2 dim}. Consider a particle starting at a point $(x,y)$ in the interior of a rectangular grid of size $M\times N$, where $0<x<M$ and $0<y<N$. At each step, the particle moves in one of four directions, with probabilities $p_L, p_U, p_R,$ and $p_B$, corresponding to left, up, right and down movements, respectively. The particle stops moving once it hits one of the four boundaries, defined by $x=0, x=M, y=0$, or $y=N$.

\subsection{Probability}
We begin by considering the recurrence relation for the probability that the particle reaches some position at one of the four boundaries. We then rewrite this recurrence and introduce new variables for the probabilities at each $x$ and $y$.

Define $f(x,y)=f_L(x,y)L+f_R(x,y)R+f_U(x,y)U+f_D(x,y)D$, where $f_L(x,y)$ is the probability that the particle, starting at $(x,y)$ will exit the rectangle on the left side,
and analogously for $f_R(x,y),f_U(x,y),f_D(x,y)$, and $L,R,U,D$ are formal variables.
For $0<x<M$ and $0<y<N$, this probability satisfies the recurrence relation
\begin{equation}\label{eq:twodim prob}
    f(x,y)=p_Lf(x-1,y)+p_Uf(x,y+1)+p_Rf(x+1,y)+p_Bf(x,y-1)
\end{equation}

with boundary conditions $f(0,y)=L, f(x,N)=U, f(M,y)=R$ and $f(x,0)=B$ for all $0<x<M$ and $0<y<N$. 
Thus, the coefficients of $L, U, R,$ and $B$ in $f(x,y)$ represent the respective probabilities that a particle starting at $(x,y)$ will end on each boundary. For any $0<x<M$ and $0<y<N$, $f(x,y)$ can be determined by solving a system of $(M-1)\times(N-1)$ linear equations with $(M-1)\times(N-1)$ unknowns.

In the next subsection, we will introduce symbolic variables into the recurrence relation for the expected duration, as outlined in \Cref{sec:general 1d}. This method, applied to Equation \ref{eq:twodim prob} , 
but the same approach works for efficient computations of probabilities.
This is further detailed in \Cref{appendix:ggr2d}.

\subsection{Expected duration}\label{subsec:exp in 2d} We now consider the expected duration until the particle reaches some position at one of the four boundaries. We focus on the case when $p_W=p_N=p_E=p_S=\frac{1}{4}$, and remark that one can adapt our strategy for $p_W\neq p_N\neq p_E\neq p_S$ as described in \Cref{appendix:ggr2d}.

Define $g(x,y)$ as the expected number of steps that a particle starting at $(x,y)$ will eventually take to reach a position at one of the boundaries. For $0<x<M$ and $0<y<N$, this expected duration satisfies the recurrence relation
\begin{equation}\label{eq:2d expected duration}
    g(x,y)=\frac{1}{4}g(x-1,y)+\frac{1}{4}g(x,y+1)+\frac{1}{4}g(x+1,y)+\frac{1}{4}g(x,y-1)+1
\end{equation}

with boundary conditions $g(0,y)=g(M,y)=g(x,0)=g(x,N)=0$ for all $0<x<M$ and $0<y<N$. Similarly to the probability case, $g(x,y)$ can be determined  by solving a system of $(M-1)\times(N-1)$ linear equations with $(M-1)\times(N-1)$ unknowns.

Orr and Zeilberger provided a solution that reduces the number of linear equations from $(M-1)\times(N-1)$ to $\mathcal{O}(N+M)$. Their approach exploits symmetry by solving $g(0,y)=g(M-1,y)$ for $0<y<N$, and $g(x,1)=g(x,N-1)$ for $0<x<M$ \cite{OrrDoron}. Kmet and Petkov\v{s}ek gave an explicit solution involving a double sum, enabling direct computation of the expected duration without the need to solve systems of equations or use recursion \cite{Kmet}. For the special case where $M=N$, they established the following result.

\begin{theorem}[\cite{Kmet}, Equation (11)]
Consider the 2-dimensional gambler's ruin problem as stated in this section. Then, the expected duration of the game when the particle starts at $(x,y)$ is given by
\begin{equation*}
    g(x,y)=\frac{4}{M^2}\sum_{\substack{k=1 \\ k \text{ odd}}}^{M-1} \left(\sin{\left(\frac{jk\pi}{M}\right)}\cot{\left(\frac{k\pi}{2M}\right)} \sum_{\substack{\ell=1 \\ \ell \text{ odd}}}^{M-1} \frac{\sin{(\frac{i\ell\pi}{M})}\cot{(\frac{\ell\pi}{2M})}}{\sin^2{(\frac{k\pi}{2M})}+\sin^2{(\frac{\ell \pi}{2M})}}\right)
\end{equation*}
for any $0\leq x,y\leq M$.
\end{theorem}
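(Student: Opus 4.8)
The plan is to solve the boundary value problem in \Cref{eq:2d expected duration} explicitly by diagonalizing the associated discrete Laplacian on the interior of the grid, that is, by a two-dimensional discrete sine transform.

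First I would recast \Cref{eq:2d expected duration} as a discrete Poisson equation: $g$ is the function on $\{0,1,\dots,M\}^2$ that vanishes on the boundary and satisfies $4g(i,j)-g(i-1,j)-g(i+1,j)-g(i,j-1)-g(i,j+1)=4$ at every interior point $(i,j)$. This system has a unique solution, because the underlying Markov chain is finite and its absorbing set is reachable from every state, so the expected duration is finite and is pinned down by first-step analysis together with the boundary values; equivalently, the matrix of the system is the positive-definite discrete Dirichlet Laplacian. Next I would introduce, for $1\le k,\ell\le M-1$, the functions $\phi_{k,\ell}(i,j)=\sin(k\pi i/M)\sin(\ell\pi j/M)$. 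A short computation with the identity $\sin(\theta-\alpha)+\sin(\theta+\alpha)=2\sin\theta\cos\alpha$ shows that each $\phi_{k,\ell}$ vanishes on the boundary and is an eigenfunction of the operator above with eigenvalue $\lambda_{k,\ell}=4-2\cos(k\pi/M)-2\cos(\ell\pi/M)=4\sin^2(k\pi/2M)+4\sin^2(\ell\pi/2M)>0$, while the classical relation $\sum_{i=1}^{M-1}\sin(k\pi i/M)\sin(k'\pi i/M)=\tfrac{M}{2}\,\delta_{k,k'}$ shows that the $\phi_{k,\ell}$ form an orthogonal basis of the $(M-1)^2$-dimensional space of interior functions, with $\norm{\phi_{k,\ell}}^2=M^2/4$.

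Then I would expand the constant right-hand side in this basis, writing $4=\sum_{k,\ell}c_{k,\ell}\,\phi_{k,\ell}$ with $c_{k,\ell}=\tfrac{16}{M^2}\big(\sum_{i=1}^{M-1}\sin(k\pi i/M)\big)\big(\sum_{j=1}^{M-1}\sin(\ell\pi j/M)\big)$. Evaluating the geometric sum gives $\sum_{i=1}^{M-1}\sin(k\pi i/M)=\cot(k\pi/2M)$ for $k$ odd and $0$ for $k$ even, so only the terms with both $k$ and $\ell$ odd survive, and there $c_{k,\ell}=\tfrac{16}{M^2}\cot(k\pi/2M)\cot(\ell\pi/2M)$. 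Finally, since $g=\sum_{k,\ell}(c_{k,\ell}/\lambda_{k,\ell})\,\phi_{k,\ell}$, I would substitute these values, cancel the factor $16/4=4$, and read off the claimed double sum over odd $k,\ell$ (up to renaming the summation indices and matching the spatial coordinates to the paper's notation). The hypothesis $M=N$ enters only as the common modulus for the two frequency families; for a genuine rectangle the same argument produces $\sin(\ell\pi j/N)$ in place of one factor and $\lambda_{k,\ell}=4\sin^2(k\pi/2M)+4\sin^2(\ell\pi/2N)$.

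Every step here is essentially mechanical, but the place I would be most careful is the evaluation of $\sum_{i=1}^{M-1}\sin(k\pi i/M)$, including the parity dichotomy ($k$ odd $\leftrightarrow\cot$, $k$ even $\leftrightarrow 0$) and the bookkeeping of the normalization constants $M/2$ and $M^2/4$: a careless telescoping either spoils the constant out front or leaves spurious even-frequency contributions. The eigenvalue identity and the orthogonality relations are standard. A slightly less explanatory alternative would be to treat the stated formula as an ansatz, verify directly that it satisfies \Cref{eq:2d expected duration} with zero boundary conditions, and then invoke the same uniqueness statement; this merely trades the Fourier-coefficient computation for a trigonometric verification of the recurrence.
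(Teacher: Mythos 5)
Your proposal is correct, but note that the paper does not actually prove this statement: it is quoted verbatim from Kmet and Petkov\v{s}ek (their Equation (11)) and used only as a benchmark for timing comparisons, so there is no internal proof to compare against. Your derivation supplies one, and it is the standard spectral route, essentially the same separation-of-variables/finite-Fourier argument underlying the cited source. The individual ingredients all check out: $\phi_{k,\ell}(i,j)=\sin(k\pi i/M)\sin(\ell\pi j/M)$ vanishes on the boundary and satisfies $4\phi_{k,\ell}-\phi_{k,\ell}(i\pm1,j)-\phi_{k,\ell}(i,j\pm1)=\lambda_{k,\ell}\phi_{k,\ell}$ with $\lambda_{k,\ell}=4\sin^2(k\pi/2M)+4\sin^2(\ell\pi/2M)>0$; the orthogonality relation with $\norm{\phi_{k,\ell}}^2=M^2/4$ is standard; the sum $\sum_{i=1}^{M-1}\sin(k\pi i/M)$ equals $\cot(k\pi/2M)$ for odd $k$ and $0$ for even $k$ (via $\sin(k\pi/2)=0$ in the closed form of the geometric sum), which is exactly the parity dichotomy you flagged; and the constants work out, since $c_{k,\ell}/\lambda_{k,\ell}=\frac{16}{M^2}\cot(k\pi/2M)\cot(\ell\pi/2M)\big/\bigl(4\sin^2(k\pi/2M)+4\sin^2(\ell\pi/2M)\bigr)$ yields precisely the prefactor $4/M^2$ in the stated formula, with only odd frequencies surviving. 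The uniqueness argument (positive-definite Dirichlet Laplacian, or finiteness of the absorption time) legitimately pins down $g$, so your shorter ``verify the ansatz'' alternative would also be acceptable. One cosmetic point: the theorem as printed writes $g(x,y)$ on the left but $i,j$ inside the sums; your closing remark about matching coordinates resolves this, and your observation that the rectangular case $M\neq N$ goes through with $\sin(\ell\pi j/N)$ and $\lambda_{k,\ell}=4\sin^2(k\pi/2M)+4\sin^2(\ell\pi/2N)$ is a correct and mild extension of the $M=N$ statement given here.
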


We propose an alternative approach to those of Orr and Zeilberger, and Kmet and Petkov\v{s}ek. By rewriting \Cref{eq:2d expected duration}, we obtain the recurrence
\begin{equation}\label{eq:2d new expected recurrence}
    g(x,y)=4g(x-1,y)-g(x-1,y-1)-g(x-1,y+1)-g(x-2,y)-4
\end{equation}

where $x\to x-1$. The boundary conditions remain $g(0,y)=g(M,y)=g(x,0)=g(x,N)=0$ for all $x,y$. For each $1\leq j \leq N-1$ define $d_j=g(1,j)$ and construct the set $S=\{d_1,d_2,\ldots, d_{N-1}\}$. Using these variables, we reduce the system of equations. Initially, $g(x,y)$ is solved by constructing a system of $(M-1)\times(N-1)$ linear equations with $(M-1)\times(N-1)$ unknowns. By using $S$,  we reformulate the system into:

\begin{itemize}[leftmargin=0.2 in]
    \item $N-1$ equations for $x=1$,
    \item $(M-2)\times(N-1)$ equations for $2\leq x\leq M-1$, and
    \item $N-1$ boundary equations for $x=M$.
\end{itemize}

The boundary equations, derived using \Cref{eq:2d new expected recurrence}, are expressed in terms of the variables in $S$ and reduce the system to $N-1$ boundary equations with $N-1$ unknowns. Once these $N-1$ variables are solved, the remaining $(M-2)\times(N-1)$ equations can be determined. We illustrate this process in \Cref{ex:new approach}.

\begin{example}\label{ex:new approach}
Let $M=N=3$. This system consists of $4$ linear equations with $2$ unknowns: $d_1=g(1,1)$ and $d_2=g(1,2)$. The boundary conditions are $g(0,y)=g(M,y)=g(x,0)=g(x,N)=0$ for all $0\leq x,y\leq 3$. Using the recurrence in \Cref{eq:2d new expected recurrence}, we derive
\begin{align*}
    d_1&=g(1,1)\\
    d_2&=g(1,2)\\
    g(2,1)&=4d_1-d_2-4\\
    g(2,2)&=4d_2-d_1-4\\
    g(3,1)&=0=16d_1-8d_2-16\\
    g(3,2)&=0=16d_2-8d_1-16.
\end{align*}
Solving the last two equations yields $d_1=2$ and $d_2=2$. Substituting these values into the earlier equations gives $g(1,1)=g(1,2)=g(2,1)=g(2,2)=2$.
\end{example}

\subsection{Comparison between Kmet and Petkov\v{s}ek formula and faster method}\label{subsec:time comparison in 2d}
We compare the computational running times in Maple between Kmet and Petkov\v{s}ek's formula for the expected duration and the faster method introduced in the previous subsection. Specifically, we evaluate the performance of \lstinline{NewGR2dL} and \lstinline{KmetPetkovsek},  as described in \Cref{appendix:ggr2d}. Using the commands \lstinline{time(evalf(NewGR2dL(M,M)))} and \lstinline{time(KmetPetkovsek(M)))}, we measure the execution time for varying values of $M$. The results provide a direct comparison of the efficiency of the two methods. The measured times in Maple are summarized below:

\begin{table}[H]
\centering
\begin{tabular}{|c|c|c|c|c|c|c|}
\hline
$M$ & Faster Method & Kmet and Petkov\v{s}ek & $M$ & Faster Method & Kmet and Petkov\v{s}ek \\
&(seconds) & (seconds)& &(seconds) & (seconds)\\
\hline
10 & 0.015 & 0.015  & 90 & 9.843 &  181.953 \\ \hline
20 & 0.015 &  0.500 & 100 & 14.765 & 301.906\\ \hline
30 & 0.093 & 1.859  & 110 & 21.171 & 451.625\\ \hline
40 & 0.343 & 5.703  & 120 & 36.187 &  641.609\\ \hline
50 & 0.468 & 15.171 & 130 & 57.281  & 872.015 \\ \hline
60 & 1.171 & 35.125 & 140 & 80.140  &  1155.281\\ \hline
70 & 2.640 & 63.062 & 150 & 133.109 &  1564.515\\ \hline
80 & 5.062 &  114.234 & 160 &  226.312& 2023.125\\ \hline
\end{tabular}
\end{table}

While Kmet and Petkov\v{s}ek's formula expresses the expected duration as a double sum, it is computationally less efficient compared to our method.

\subsection{Variance} In this subsection, we analyze the variance of the duration of the $2$-dimensional gambler's ruin game under the condition $p_W=p_N=p_E=p_S=\frac{1}{4}$. The computation builds upon the expected duration discussed \Cref{subsec:exp in 2d}.

Define $F(x,y,t)$ as the probability generating function of the duration of the $2$-dimensional gambler's ruin game. For $0<x<M$ and $0<y<N$, this function satisfies the recurrence relation

\begin{equation}\label{eq:2d variance eq in t}
    F(x,y,t)=\frac{t}{4}(F(x-1,y,t)+F(x+1,y,t)+F(x,y-1,t)+F(x,y+1,t))
\end{equation}

where $F(0,0,t)=1$ and $F(M,N,t)=1$. Making the substitution $t\mapsto z+1$ yields

\begin{equation}
    F(x,y,z)=\frac{1+z}{4}(F(x-1,y,z)+F(x+1,y,z)+F(x,y-1,z)+F(x,y+1,z)),
\end{equation}

with $F(0,0,z)=1$ and $F(M,N,z)=1$. We derive an expression to estimate the second factorial moment. Expanding $F(x,y,z)$ as a Taylor series gives

\begin{equation}\label{eq:taylor in 2d}
    F(x,y,z)=1+g(x,y)z+\frac{h(x,y)}{2!}z^2 +\cdots
\end{equation}

where $g(x,y)$ is the expected duration as defined in \Cref{subsec:exp in 2d}, and $h(x,y)$ represents the second factorial moment at $(x,y)$. Substituting \Cref{eq:taylor in 2d} into \Cref{eq:2d variance eq in t} and extracting the coefficient of $z^2$ yields an expression for $h(x,y)$:

\[4h(x,y)-h(x-1,y)-h(x+1,y)-h(x,y-1)-h(x,y+1)=2(g(x-1,y)+g(x+1,y)+g(x,y-1)+g(x,y+1))\]
where $g(0,y)=g(M,y)=g(x,0)=g(x,N)=0$ for all $x,y$. The sum $g(x,y)+h(x,y)$ gives the second moment for $0<x<M$ and $0<y<N$. 

The variance at $(x,y)$, denoted by $V(x,y)$, is computed as
\[ V(x,y)=g(x,y)+h(x,y)-(g(x,y))^2\]
where $g(x,y)$ and $h(x,y)$ are defined by
\[g(x,y)=4g(x-1,y)-g(x-1,y-1)-g(x-1,y+1)-g(x-2,y)-4\]
and
\[
    4h(x,y)-h(x-1,y)-h(x+1,y)-h(x,y-1)-h(x,y+1)=2(g(x-1,y)+g(x+1,y)+g(x,y-1)+g(x,y+1)).\]

We conclude this section with a table comparing the expected duration and the standard deviation when the particle starts at $(x,y)=(\frac{M}{2},\frac{M}{2})$ for various $M$ under probabilities $P=[\frac{1}{6},\frac{1}{3},\frac{1}{6},\frac{1}{3}]$.

\begin{table}[H]
\centering
\begin{tabular}{|c|c|c|c|}
\hline
$M$ & $(x,y)$ & Expected Duration& Standard Deviation \\
\hline
$2$ & $(1,1)$ & $1$ & $0$\\\hline
$4$ & $(2,2)$ & $4.470588235$ & $ 2.891342524$\\ \hline
$6$ & $(3,3)$ & $10.3030$ & $7.102295958$\\ \hline
$8$ & $(4,4)$ & $18.47746573 $ & $12.97689858$ \\ \hline
$10$ & $(5,5)$ & $28.99020033 $ & $20.52455308$\\ \hline
$12$ & $(6,6)$ & $ 41.84019550$ & $29.74741677$\\ \hline
$14$ & $(7,7)$ & $ 57.02707373$ & $40.64621816$\\ \hline
$16$ & $(8,8)$ & $ 74.55067222$ & $53.22126050$\\ \hline
$18$ & $(9,9)$ & $94.41091165 $ & $67.47268859$\\ \hline
$20$ & $(10,10)$ & $116.6077497 $ & $83.40057864$ \\ \hline
\end{tabular}
\end{table}

\section{A Mirror Step Variant of Gambler's Ruin}\label{sec:mirror step}

In this section, we begin by formulating a new generalization of the gambler's ruin problem in $1$-dimension. A particle starts at some point $x$ on a line of length $N$ where $0< x <N$. At each step, the particle moves from $x$ to $x-1$ with probability $q_1$, or moves from $x$ to $x+1$ with probability $q_2$, or moves from $x$ to $N-x$ with probability $p$ where $0<p<1$ and $q_1+q_2+p=1$. We call this last step the \textit{mirror step}. The particle continues to walk on the line until it reaches $0$ or $N$. We focus on the case when $q_1=q_2=\frac{1-p}{2}$ and we call this the \textit{symmetric case}.

\subsection{Probability}
Define $f(x)=f_N^{(p)}(x)$ as the probability that a particle starting at $x$ will eventually reach $N$. For $0<x<N$, this probability satisfies the recurrence relation
\begin{equation} \label{eq:prob recurrence mirror}
    f(x)=\frac{1-p}{2}f(x-1)+\frac{1-p}{2}f(x+1)+pf(N-x)
\end{equation}
where $f(0)=0$ and $f(N)=1$. Before providing the solution to \Cref{eq:prob recurrence mirror}, we will try to guess the limit
as $N$ goes to infinity of the probability when the particle starts at some $x$ and ends at $N$, by using a fixed, large,  $N$ as discussed in \Cref{subsec:mirror}. We are interested in

\begin{equation}\label{eq:limit of fN(x)}
    \lim_{N\to \infty} f_N^{(p)}(x)
\end{equation}

and we hope to get expressions for when $x=2$ and $x=N-2$. First, we describe the approach that will generate data for fixed values of $N$ and $x$. Using the data, we can make some guesses for the limit in \Cref{eq:limit of fN(x)}.

\begin{itemize}[leftmargin=0.2 in]
    \item Fix $N$ as large as possible and use Maple to generate $N-1$ linear equations with \Cref{eq:prob recurrence mirror}.
    \item Solve for the $N-1$ linear equations.
    \item Generate data for $f_N^{(p)}(x)$ for different $p$ values with fixed $N$ and a fixed location $x$. 
    \item We use the function \texttt{identify} in Maple to try to identify the numerical values given by $f_N^{(p)}(x)$. The function \texttt{identify} is based, in part, on the continued fraction expansion of the numerical values. As $N$ grows the numerical value will converge to some number.
    \item Guess a formula for the number with fixed $x$ and varying $p$.
\end{itemize}

\noindent We begin with an example on how to generate data in Maple using the code accompanying this article.

\begin{example}\label{ex: x=1 prob}
Let $N=100$ and $x=2$. We generate data for the probability that if the particle starts at $x=2$, the particle eventually ends at $100$. Let $p\in \{\frac{1}{2},\frac{1}{3},\frac{1}{4},\ldots, \frac{1}{10}\}$. 

We use the procedure \texttt{Lk(p,x,N)}, as described in \Cref{appendix:mirror}, in Maple which generates the following data in about $8.390$ seconds:
\begin{align*}
    T \coloneqq &[0.48528137423857029281, 0.46410161513775458705, 0.444, 0.42705098312484227231,\\ &0.41171425595857973499, 0.39811169380648470689, 0.38595282133533513790, 0.375, \\
    &0.36506306819388080622].
\end{align*}

\noindent The sequence $T$ reads as follows. If the particle starts at $x=2$ and $p=\frac{1}{2}$, the particle moves from $x$ to $x-1$ with probability $\frac{1-p}{2}=\frac{1}{4}$, or moves from $x$ to $x+1$ with probability $\frac{1-p}{2}=\frac{1}{4}$, or moves from $x$ to $100-x$ with probability $p=\frac{1}{2}$. Then, the probability of the particle starting at $x=2$ and ending at $100$ is $T_1=0.48528137423857029281$. Similarly, if the particle starts at $x=2$ and $p=\frac{1}{3}$, the particle moves from $x$ to $x-1$ with probability $\frac{1-p}{2}=\frac{1}{3}$, or moves from $x$ to $x+1$ with probability $\frac{1-p}{2}=\frac{1}{3}$, or moves from $x$ to $100-x$ with probability $p=\frac{1}{3}$. Then, the probability of the particle starting at $x=2$ and ending at $100$ is $T_2=0.46410161513775458705$. Thus, $T_i$ is the probability of the particle starting at $x=2$ and ending at $100$ for $p=\frac{1}{i+1}$ where $1\leq i\leq 9$.

Using \texttt{identify} in Maple for the sequence $T$, we conjecture that each of the probabilities in $T$ converge to  
\begin{align*}
    M\coloneqq&\left[-8+6 \sqrt{2}, -3+2 \sqrt{3}, \frac{4}{9}, 
\frac{-5+3 \sqrt{5}}{4}, 
\frac{-24 + 14\sqrt{6}}{25}, 
\frac{-7+4 \sqrt{7}}{9}, 
\frac{-32+36 \sqrt{2}}{49}, {\frac{3}{8}}, 
\frac{-40+22 \sqrt{10}}{81}\right].
\end{align*}
That is the probability of the particle starting at $x=2$ and ending at $N$ as $N$ grows converges to $M_i$ for $p=\frac{1}{i+1}$ where $1\leq i\leq 9$.
\end{example}

The previous example illustrates that when the particle starts at $x=2$, $\{f_N^{(p)}(2)\}$ converges fast. We state the following guess for the expression of the limit when $x=2$ and in \Cref{cor:probability at infinity} we provide a proof.

\begin{guess}\label{guess:x=2}
 If the particle starts at $x=2$, then
\[\lim_{N\to\infty}f_N^{(p)}(2)=\frac{2\sqrt{p}(1+p-2\sqrt{p})}{(1-p)^2}.\]
\end{guess}

Using the same approach from above, we can obtain data for $x=N-2$. We guess the following expression of the limit when $x=N-2$ and provide a proof in \Cref{cor:probability at infinity}.

\begin{guess}\label{guess:x=N-2}
 If the particle starts at $x=N-2$, then
\[\lim_{N\to\infty}f_N^{(p)}(N-2)=\frac{(1+p)(1+p-2\sqrt{p})}{(1-p)^2}.\]
\end{guess}

We were able to guess more expressions for $\lim_{N\to\infty} f_N(x)$ with other $x$ values. After we made these guesses, we established the following key lemma which provides a relation between $f(x)$ and $f(N-x)$ for any $0\leq x\leq N$. We stress that this is only true for the symmetric case when the probability of the particle moving from $x$ to $x-1$ is the same as the probability of the particle moving from $x$ to $x+1$.

\begin{lemma}\label{lem:mirror probability equal to 1}
Consider the symmetric case when $f(x)=\frac{1-p}{2}f(x-1)+\frac{1-p}{2}f(x+1)+pf(N-x)$ with boundary conditions $f(0)=0, f(N)=1$ for some $0<p<1$. For any $0\leq x\leq N$, the following identity holds \[f(x) + f(N-x)=1.\]
\end{lemma}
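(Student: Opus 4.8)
The plan is to exploit the symmetry $x \leftrightarrow N-x$ of the recurrence. Define $h(x) = f(x) + f(N-x)$ for $0 \le x \le N$; the goal is to show $h(x) \equiv 1$. The natural strategy is to show that $h$ satisfies the \emph{same} recurrence as $f$ but with boundary data forcing $h \equiv 1$, or — what is cleaner here — to show that $g(x) := 1 - f(N-x)$ satisfies the exact same linear system (recurrence plus boundary conditions) that $f$ does, and then invoke uniqueness of the solution.

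First I would write down the recurrence satisfied by $f(N-x)$. Replacing $x$ by $N-x$ in \Cref{eq:prob recurrence mirror} gives
\[
f(N-x) = \tfrac{1-p}{2}f(N-x-1) + \tfrac{1-p}{2}f(N-x+1) + p f(x),
\]
for $0 < N-x < N$, i.e.\ for $0 < x < N$. Now set $g(x) = 1 - f(N-x)$. Then $g(0) = 1 - f(N) = 0$ and $g(N) = 1 - f(0) = 1$, so $g$ has the correct boundary values. Next, using the displayed recurrence for $f(N-x)$ and the fact that $\tfrac{1-p}{2} + \tfrac{1-p}{2} + p = 1$, one computes
\[
g(x) = 1 - f(N-x) = \tfrac{1-p}{2}\bigl(1 - f(N-x-1)\bigr) + \tfrac{1-p}{2}\bigl(1 - f(N-x+1)\bigr) + p\bigl(1 - f(x)\bigr),
\]
and since $1 - f(N-(x+1)) = g(x+1)$, $1 - f(N-(x-1)) = g(x-1)$, and $1 - f(x) = $ (the would-be mirror term, which needs $g(N-x) = 1 - f(x)$), this rearranges to
\[
g(x) = \tfrac{1-p}{2}g(x-1) + \tfrac{1-p}{2}g(x+1) + p\,g(N-x).
\]
Thus $g$ satisfies \Cref{eq:prob recurrence mirror} with the same boundary conditions as $f$.

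The final step is uniqueness: the linear system consisting of \Cref{eq:prob recurrence mirror} for $0 < x < N$ together with $f(0) = 0$, $f(N) = 1$ has a unique solution. This is standard — it is the $N-1$ by $N-1$ linear system already discussed in the paper, and its solution is unique because the associated homogeneous system (a gambler's-ruin-type Markov chain with absorbing states at $0$ and $N$, which is reached with probability one) has only the trivial solution, so the coefficient matrix is invertible. Hence $g \equiv f$, which is exactly $1 - f(N-x) = f(x)$, i.e.\ $f(x) + f(N-x) = 1$. The cases $x = 0$ and $x = N$ follow directly from the boundary conditions, and $x$ ranging over $0 \le x \le N$ is then complete.

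The main obstacle is purely bookkeeping: one must be careful that the index shifts stay inside the valid range $0 < \cdot < N$ when translating the recurrence for $f(N-x)$, and one must invoke (or briefly justify) the uniqueness of the solution to the linear system rather than taking it for granted — the mirror term makes the system not a plain tridiagonal one, but it is still the transition system of an absorbing Markov chain, so uniqueness holds. No genuinely hard estimate is involved; the content is entirely the symmetry substitution $g(x) = 1 - f(N-x)$ plus uniqueness.
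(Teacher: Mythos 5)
Your proposal is correct and follows essentially the same route as the paper: define $g(x)=1-f(N-x)$, check the boundary values, substitute into the recurrence \Cref{eq:prob recurrence mirror} to see that $g$ satisfies the identical system, and conclude $g=f$. The only difference is that you explicitly justify the uniqueness of the solution to the linear system (via the absorbing-chain argument), a step the paper's proof uses implicitly, which is a reasonable bit of added rigor rather than a different approach.
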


\begin{proof}
    Call $g(x)=1-f(N-x)$. We will show that $f(x)+f(N-x)=1$ by proving that $g(x)=f(x)$. Note that $g(0)=0$ and $g(N)=1$, and $f(x)=1-g(N-x)$. Using the recurrence in \Cref{eq:prob recurrence mirror}, we substitute $1-g(N-x)$ for $f(x)$ and obtain:
    
    \begin{equation*}
        g(N-x)=\frac{1-p}{2}g(N-(x-1))+\frac{1-p}{2}g(N-(x+1))+pg(x).
    \end{equation*}
    That is,
    \begin{equation*}
        g(x)=\frac{1-p}{2}g(x+1))+\frac{1-p}{2}g(x-1)+pg(N-x)
    \end{equation*}
    which has boundary conditions $g(0)=0$ and $g(N)=1$. Therefore, $g(x)=f(x)$ as desired.
\end{proof}

Lemma \ref{lem:mirror probability equal to 1} establishes that the sums of the probabilities when the particle starts at $x$ and when the particle starts at $N-x$ equals to 1. Using this identity, we rewrite \Cref{eq:prob recurrence mirror} to

\begin{equation}\label{eq:inhomogeneous eq}
    f(x)=\frac{p}{1+p}+\frac{1}{2}\left(\frac{1-p}{1+p}\right)f(x-1)+\frac{1}{2}\left(\frac{1-p}{1+p}\right)f(x+1),  \quad f(0)=0, f(N)=1.
\end{equation}

We can now derive the probability of the particle ending at $N$ if it starts at some $x$, for general $N, x$ and $p$.

\begin{theorem}\label{thm:probability mirror}
Consider the generalization of the gambler's ruin problem when we add a mirror step. Then, the probability of ending at $N$ starting at $x$ is given by
\begin{equation*}
    f(x)=\frac{1}{2}\frac{\left(\frac{1-\sqrt{p}}{1+\sqrt{p}}\right)^N+1}{\left(\frac{1+\sqrt{p}}{1-\sqrt{p}}\right)^N-\left(\frac{1-\sqrt{p}}{1+\sqrt{p}}\right)^N}\left(\frac{1+\sqrt{p}}{1-\sqrt{p}}\right)^x
    +\frac{1}{2}\frac{\left(\frac{1+\sqrt{p}}{1-\sqrt{p}}\right)^N+1}{\left(\frac{1-\sqrt{p}}{1+\sqrt{p}}\right)^N-\left(\frac{1+\sqrt{p}}{1-\sqrt{p}}\right)^N}\left(\frac{1-\sqrt{p}}{1+\sqrt{p}}\right)^x+\frac{1}{2}
\end{equation*}
whenever we restrict the particle moves by either moving from $x$ to $x-1$ with probability $q_1$, or from $x$ to $x+1$ with probability $q_2$, or from $x$ to $N-x$ with probability $p$ where $q_1=q_2=\frac{1-p}{2}$.
\end{theorem}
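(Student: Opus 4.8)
The plan is to solve the linear recurrence \Cref{eq:inhomogeneous eq}, which \Cref{lem:mirror probability equal to 1} has reduced to an ordinary (non-mirror) inhomogeneous second-order recurrence with constant coefficients. Writing $\beta = \tfrac12\left(\tfrac{1-p}{1+p}\right)$ for the coefficient of $f(x\pm 1)$ and $\gamma = \tfrac{p}{1+p}$ for the constant term, \Cref{eq:inhomogeneous eq} reads $f(x) = \beta f(x-1) + \beta f(x+1) + \gamma$. First I would find a particular solution: since the recurrence is symmetric in $x-1,x+1$, try the constant $f \equiv c$, giving $c = 2\beta c + \gamma$, hence $c = \gamma/(1-2\beta)$. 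A direct computation shows $1 - 2\beta = 1 - \tfrac{1-p}{1+p} = \tfrac{2p}{1+p}$, so $c = \tfrac{p/(1+p)}{2p/(1+p)} = \tfrac12$, which explains the stray $+\tfrac12$ in the stated formula.

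Next I would solve the homogeneous part $f(x) = \beta f(x-1) + \beta f(x+1)$, i.e. $\beta \lambda^2 - \lambda + \beta = 0$. The two roots multiply to $1$ (product of roots $= \beta/\beta = 1$), so they are reciprocal: $\lambda$ and $1/\lambda$. Solving the quadratic, $\lambda = \tfrac{1 \pm \sqrt{1 - 4\beta^2}}{2\beta}$; one then checks that $1 - 4\beta^2 = 1 - \left(\tfrac{1-p}{1+p}\right)^2 = \tfrac{4p}{(1+p)^2}$, so $\sqrt{1-4\beta^2} = \tfrac{2\sqrt p}{1+p}$, and $\lambda = \tfrac{(1+p) \pm 2\sqrt p}{1-p} = \tfrac{(1\pm\sqrt p)^2}{(1-\sqrt p)(1+\sqrt p)} = \tfrac{1\pm\sqrt p}{1\mp\sqrt p}$. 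Thus the two roots are exactly $\mu := \tfrac{1+\sqrt p}{1-\sqrt p}$ and $\mu^{-1} = \tfrac{1-\sqrt p}{1+\sqrt p}$, matching the bases appearing in the theorem. The general solution is therefore $f(x) = A\mu^x + B\mu^{-x} + \tfrac12$.

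Finally I would impose the boundary conditions $f(0)=0$ and $f(N)=1$. From $f(0)=0$: $A + B + \tfrac12 = 0$, so $A + B = -\tfrac12$. From $f(N)=1$: $A\mu^N + B\mu^{-N} + \tfrac12 = 1$, so $A\mu^N + B\mu^{-N} = \tfrac12$. Solving this $2\times 2$ linear system by Cramer's rule gives $A = \tfrac{\tfrac12 \mu^{-N} + \tfrac12}{\mu^{-N} - \mu^N} = \tfrac12\,\tfrac{\mu^{-N}+1}{\mu^{-N}-\mu^N}$ and $B = \tfrac12\,\tfrac{\mu^{N}+1}{\mu^{N}-\mu^{-N}}$, which upon substituting $\mu = \tfrac{1+\sqrt p}{1-\sqrt p}$ and $\mu^{-1} = \tfrac{1-\sqrt p}{1+\sqrt p}$ are precisely the coefficients of $\left(\tfrac{1+\sqrt p}{1-\sqrt p}\right)^x$ and $\left(\tfrac{1-\sqrt p}{1+\sqrt p}\right)^x$ in the statement. (Note the sign bookkeeping: in the stated $A$ the denominator is written $\mu^N - \mu^{-N}$ with the numerator $\mu^{-N}+1$; one should double-check the pairing of numerator and denominator, since $\tfrac{\mu^{-N}+1}{\mu^{-N}-\mu^N}$ and the displayed $\tfrac{(\mu^{-1})^N+1}{\mu^N-(\mu^{-1})^N}$ differ by a sign — I would reconcile this by carefully tracking which of $A,B$ goes with which root.) The only real obstacle here is cosmetic rather than mathematical: verifying that $p\ne 1$ guarantees $\mu \ne \mu^{-1}$ so the two roots are distinct (hence the $2\times 2$ system is nonsingular and the general-solution form is valid), and then carrying out the algebraic simplifications $1-2\beta = \tfrac{2p}{1+p}$, $1-4\beta^2 = \tfrac{4p}{(1+p)^2}$, and the root identity $\tfrac{(1+p)+2\sqrt p}{1-p} = \tfrac{1+\sqrt p}{1-\sqrt p}$ cleanly enough that the final expression matches the claimed closed form verbatim, including the signs in the two denominators.
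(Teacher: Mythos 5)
Your proposal follows essentially the same route as the paper: use \Cref{lem:mirror probability equal to 1} to pass to the inhomogeneous constant-coefficient recurrence \Cref{eq:inhomogeneous eq}, take the constant particular solution $\tfrac12$, identify the reciprocal characteristic roots $\mu=\tfrac{1+\sqrt p}{1-\sqrt p}$ and $\mu^{-1}$, and fix $A,B$ from $f(0)=0$, $f(N)=1$; this is exactly the paper's proof of \Cref{thm:probability mirror}. The one flaw is the sign issue you flagged: it is not a matter of pairing numerators with denominators but a slip in your Cramer computation, where the right-hand side column is $(-\tfrac12,\tfrac12)$, so the numerator for $A$ is $-\tfrac12\mu^{-N}-\tfrac12$ (not $+\tfrac12\mu^{-N}+\tfrac12$), giving $A=\tfrac12\,\tfrac{\mu^{-N}+1}{\mu^{N}-\mu^{-N}}$ and likewise $B=\tfrac12\,\tfrac{\mu^{N}+1}{\mu^{-N}-\mu^{N}}$; as written, your $A,B$ are the negatives of these and would produce a function with $f(0)=1$, $f(N)=0$. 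With that sign corrected, your coefficients agree verbatim with \Cref{eq: A eq} and \Cref{eq: B eq}, and the rest of the argument (distinctness of the roots for $0<p<1$, substitution of $\mu,\mu^{-1}$ for the bases) is fine.
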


\begin{proof}
    We can solve for \Cref{eq:inhomogeneous eq} because it is an inhomogeneous recurrence relation. Hence, we find a homogeneous and an inhomogeneous solution. 
    The general solution to the homogeneous equation $f(x)=\frac{1}{2}\left(\frac{1-p}{1+p}\right)f(x-1)+\frac{1}{2}\left(\frac{1-p}{1+p}\right)f(x+1)$ is
    
    \begin{equation*}
        f(x)=A\left(\frac{1+p+2\sqrt{p}}{1-p}\right)^x+B\left(\frac{1+p-2\sqrt{p}}{1-p}\right)^x
    \end{equation*}
    
    for some numbers $A$ and $B$. 
    Next, we find the particular solution to the inhomogeneous relation by setting $f^*(x)=C$ for some constant $C$. Then,
    
    \begin{equation*}
        f^*(x)=\frac{p}{1+p}+\frac{1}{2}\left(\frac{1-p}{1+p}\right)f^*(x-1)+\frac{1}{2}\left(\frac{1-p}{1+p}\right)f^*(x+1)
    \end{equation*}
    
    becomes
    
    \begin{equation*}
        C=\frac{p}{1+p}+\frac{1}{2}\left(\frac{1-p}{1+p}\right)C+\frac{1}{2}\left(\frac{1-p}{1+p}\right)C
    \end{equation*}
    
    which has solution $C=\frac{1}{2}$. 
    
    Therefore, $f^*(x)=\frac{1}{2}$ is the particular solution, and the general inhomogeneous solution is
    
    \begin{equation}\label{eq:general inhomogeneous sln}
         f(x)=A\left(\frac{1+p+2\sqrt{p}}{1-p}\right)^x+B\left(\frac{1+p-2\sqrt{p}}{1-p}\right)^x+\frac{1}{2}.
    \end{equation}
    
    Using Maple, we find $A$ and $B$ by using the boundary conditions to get a system of two linear equations. Namely,
    
    \begin{align*}
        0&=A+B+\frac{1}{2} \intertext{and}
        \frac{1}{2}&=A\left(\frac{1+p+2\sqrt{p}}{1-p}\right)^N+B\left(\frac{1+p-2\sqrt{p}}{1-p}\right)^N.
    \end{align*}
    
   Solving for the above linear equations, we get
   \begin{align}
       A&=\frac{1}{2}\frac{\left(\frac{1+p-2\sqrt{p}}{1-p}\right)^N+1}{\left(\frac{1+p+2\sqrt{p}}{1-p}\right)^N-\left(\frac{1+p-2\sqrt{p}}{1-p}\right)^N} \label{eq: A eq}\intertext{and}
       B&=\frac{1}{2}\frac{\left(\frac{1+p+2\sqrt{p}}{1-p}\right)^N+1}{\left(\frac{1+p-2\sqrt{p}}{1-p}\right)^N-\left(\frac{1+p+2\sqrt{p}}{1-p}\right)^N}.\label{eq: B eq}
   \end{align}
   
   Rewriting Equations \ref{eq:general inhomogeneous sln}, \ref{eq: A eq} and \ref{eq: B eq} gives
   \begin{align*}
       f(x)&=A\left(\frac{1+\sqrt{p}}{1-\sqrt{p}}\right)^x+B\left(\frac{1-\sqrt{p}}{1+\sqrt{p}}\right)^x+\frac{1}{2} 
   \end{align*}
as desired.
\end{proof}

\Cref{thm:probability mirror} provides a formula for $\lim_{N\to\infty}f_N(x)$ for any $x$ where $0<x<N$.

\begin{corollary}\label{cor:probability at infinity}
If the particle starts at some $x$ where $0<x<N$, then
\[\lim_{N\to \infty} f_N^{(p)}(x)=\frac{1}{2}-\frac{1}{2}\left(\frac{1-\sqrt{p}}{1+\sqrt{p}}\right)^x\]
whenever we restrict the particle moves by either moving from $x$ to $x-1$ with probability $q_1$, or from $x$ to $x+1$ with probability $q_2$, or from $x$ to $N-x$ with probability $p$ where $q_1=q_2=\frac{1-p}{2}$.
\end{corollary}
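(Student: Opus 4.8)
The plan is to read off the limit directly from the closed formula in \Cref{thm:probability mirror}, taking the limit term by term. Write $\rho = \frac{1+\sqrt{p}}{1-\sqrt{p}}$, so that $\rho^{-1} = \frac{1-\sqrt{p}}{1+\sqrt{p}}$ and, since $0<p<1$ forces $0<\sqrt p<1$, we have $\rho>1$ and $0<\rho^{-1}<1$. With this notation \Cref{thm:probability mirror} reads
\[
f_N^{(p)}(x)=\frac{1}{2}\,\frac{\rho^{-N}+1}{\rho^{N}-\rho^{-N}}\,\rho^{x}
+\frac{1}{2}\,\frac{\rho^{N}+1}{\rho^{-N}-\rho^{N}}\,\rho^{-x}
+\frac{1}{2}.
\]
Here $x$ is held fixed while $N\to\infty$, so $\rho^{x}$ and $\rho^{-x}$ are constants throughout the limiting process.

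The first step is to handle the first summand: as $N\to\infty$ the numerator $\rho^{-N}+1\to 1$ while the denominator $\rho^{N}-\rho^{-N}\to\infty$ because $\rho>1$, so the fraction tends to $0$ and, multiplied by the constant $\tfrac12\rho^{x}$, the whole term vanishes in the limit. The second step is the only one requiring any care: rewrite
\[
\frac{\rho^{N}+1}{\rho^{-N}-\rho^{N}}
=-\frac{\rho^{N}+1}{\rho^{N}-\rho^{-N}}
=-\frac{1+\rho^{-N}}{1-\rho^{-2N}},
\]
dividing numerator and denominator by $\rho^{N}$ in the last step. Since $\rho^{-N}\to 0$, this tends to $-1$, so the second summand converges to $-\tfrac12\rho^{-x}=-\tfrac12\bigl(\tfrac{1-\sqrt p}{1+\sqrt p}\bigr)^{x}$. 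The third summand is the constant $\tfrac12$.

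Adding the three limits gives
\[
\lim_{N\to\infty}f_N^{(p)}(x)=0-\frac{1}{2}\left(\frac{1-\sqrt{p}}{1+\sqrt{p}}\right)^{x}+\frac{1}{2}
=\frac{1}{2}-\frac{1}{2}\left(\frac{1-\sqrt{p}}{1+\sqrt{p}}\right)^{x},
\]
as claimed. I do not anticipate a genuine obstacle here: everything reduces to the elementary observation that $\rho>1$ and hence $\rho^{-N}\to 0$, and the main thing to be explicit about is that the limit is taken with $x$ fixed, so that the factors $\rho^{\pm x}$ play no role in the convergence. As a sanity check one can verify that specializing $x=1$ recovers $\frac{\sqrt p-p}{1-p}$ of \Cref{guess:x=1} and $x=2$ recovers the expression in \Cref{guess:x=2}, after clearing the factor $(1+\sqrt p)$ from numerator and denominator.
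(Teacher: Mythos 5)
Your proof is correct and follows essentially the same route as the paper: both start from the closed formula of \Cref{thm:probability mirror} and exploit that $\left(\frac{1-\sqrt{p}}{1+\sqrt{p}}\right)^N\to 0$ as $N\to\infty$ with $x$ fixed. Your direct term-by-term limit is in fact a bit cleaner than the paper's version, which detours through an auxiliary function $g_N^{(p)}(x)$ and shows $f_N^{(p)}(x)-g_N^{(p)}(x)\to 0$ before concluding.
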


\begin{proof}
Let 

\begin{equation}\label{eq: fNx}
    f_N^{(p)}(x)=\frac{1}{2}\frac{\left(\frac{1-\sqrt{p}}{1+\sqrt{p}}\right)^N+1}{\left(\frac{1+\sqrt{p}}{1-\sqrt{p}}\right)^N-\left(\frac{1-\sqrt{p}}{1+\sqrt{p}}\right)^N}\left(\frac{1+\sqrt{p}}{1-\sqrt{p}}\right)^x
    +\frac{1}{2}\frac{\left(\frac{1+\sqrt{p}}{1-\sqrt{p}}\right)^N+1}{\left(\frac{1-\sqrt{p}}{1+\sqrt{p}}\right)^N-\left(\frac{1+\sqrt{p}}{1-\sqrt{p}}\right)^N}\left(\frac{1-\sqrt{p}}{1+\sqrt{p}}\right)^x+\frac{1}{2},
\end{equation}

    and \[g_N^{(p)}(x)=\frac{1}{2}\frac{1}{\left(\frac{1+\sqrt{p}}{1-\sqrt{p}}\right)^N}\left(\frac{1+\sqrt{p}}{1-\sqrt{p}}\right)^x+\frac{1}{2}-\frac{1}{2}\left(\frac{1-\sqrt{p}}{1+\sqrt{p}}\right)^x.\]
    We will show that $\lim_{N\to\infty}f_N^{(p)}(x)-g_N^{(p)}(x)=0$.
    Observe that
    
    \begin{align*}
    f_N^{(p)}(x)-g_N^{(p)}(x)&=\frac{1}{2}\frac{\left(\frac{1-\sqrt{p}}{1+\sqrt{p}}\right)^N+1}{\left(\frac{1+\sqrt{p}}{1-\sqrt{p}}\right)^N-\left(\frac{1-\sqrt{p}}{1+\sqrt{p}}\right)^N}\left(\frac{1+\sqrt{p}}{1-\sqrt{p}}\right)^x
    +\frac{1}{2}\frac{\left(\frac{1+\sqrt{p}}{1-\sqrt{p}}\right)^N+1}{\left(\frac{1-\sqrt{p}}{1+\sqrt{p}}\right)^N-\left(\frac{1+\sqrt{p}}{1-\sqrt{p}}\right)^N}\left(\frac{1-\sqrt{p}}{1+\sqrt{p}}\right)^x\\
    & \quad -
    \frac{1}{2}\frac{1}{\left(\frac{1+\sqrt{p}}{1-\sqrt{p}}\right)^N}\left(\frac{1+\sqrt{p}}{1-\sqrt{p}}\right)^x+\frac{1}{2}\left(\frac{1-\sqrt{p}}{1+\sqrt{p}}\right)^x\\
    &=\frac{1}{2}\left(\frac{1+\sqrt{p}}{1-\sqrt{p}}\right)^x\left(\frac{\left(\frac{1-\sqrt{p}}{1+\sqrt{p}}\right)^N+1}{\left(\frac{1+\sqrt{p}}{1-\sqrt{p}}\right)^N-\left(\frac{1-\sqrt{p}}{1+\sqrt{p}}\right)^N}-\left(\frac{1-\sqrt{p}}{1+\sqrt{p}}\right)^N\right)\\
    &\quad+\frac{1}{2}\left(\frac{1-\sqrt{p}}{1+\sqrt{p}}\right)^x\left(\frac{\left(\frac{1+\sqrt{p}}{1-\sqrt{p}}\right)^N+1}{\left(\frac{1-\sqrt{p}}{1+\sqrt{p}}\right)^N-\left(\frac{1+\sqrt{p}}{1-\sqrt{p}}\right)^N}+\frac{1}{2}\right)\\
    &=\frac{1}{2}\left(\frac{1+\sqrt{p}}{1-\sqrt{p}}\right)^x\left(\frac{\left(\frac{1-\sqrt{p}}{1+\sqrt{p}}\right)^N+\left(\frac{1-\sqrt{p}}{1+\sqrt{p}}\right)^{2N}}{\left(\frac{1+\sqrt{p}}{1-\sqrt{p}}\right)^N-\left(\frac{1-\sqrt{p}}{1+\sqrt{p}}\right)^N}\right)-\frac{1}{2}\left(\frac{1-\sqrt{p}}{1+\sqrt{p}}\right)^x\left(\frac{1+\left(\frac{1-\sqrt{p}}{1+\sqrt{p}}\right)^N}{\left(\frac{1+\sqrt{p}}{1-\sqrt{p}}\right)^N-\left(\frac{1-\sqrt{p}}{1+\sqrt{p}}\right)^N}\right)\\
    &=\frac{1}{2}\frac{\left(1+\left(\frac{1-\sqrt{p}}{1+\sqrt{p}}\right)^N\right)\left( \left(\frac{1+\sqrt{p}}{1-\sqrt{p}}\right)^x\left(\frac{1-\sqrt{p}}{1+\sqrt{p}}\right)^N - \left(\frac{1-\sqrt{p}}{1+\sqrt{p}}\right)^x\right)}{\left(\frac{1+\sqrt{p}}{1-\sqrt{p}}\right)^N-\left(\frac{1-\sqrt{p}}{1+\sqrt{p}}\right)^N}\\
    &=\frac{1}{2}\frac{\left(1+ \left(\frac{1-\sqrt{p}}{1+\sqrt{p}}\right)^{N}\right)\left(\left(\frac{1+\sqrt{p}}{1-\sqrt{p}}\right)^x-1\right)}{\left(\frac{1+\sqrt{p}}{1-\sqrt{p}}\right)^{2N}-1}.
    \end{align*}

    Since
    
    \begin{align*}
        \lim_{N\to \infty} \left(\frac{1-\sqrt{p}}{1+\sqrt{p}}\right)^{N}=0 \intertext{and}
        \lim_{N\to \infty}\frac{1}{2}\frac{\left(\left(\frac{1+\sqrt{p}}{1-\sqrt{p}}\right)^x-1\right)}{\left(\frac{1+\sqrt{p}}{1-\sqrt{p}}\right)^{2N}-1}=0,
    \end{align*}
It follows that \[\lim_{N\to\infty}f_N^{(p)}(x)-g_N^{(p)}(x)=0.\]

Thus,
\[\lim_{N\to \infty} f_N^{(p)}(x) =\frac{1}{2}-\frac{1}{2}\left(\frac{1-\sqrt{p}}{1+\sqrt{p}}\right)^x.\]
\end{proof}

Using \Cref{cor:probability at infinity} provides a proof for \Cref{guess:x=1}. See the following example.
\begin{example}
Setting $x=1$,
\[\lim_{N\to \infty} f_N^{(p)}(1)=\frac{1}{2}-\frac{1}{2}\left(\frac{1-\sqrt{p}}{1+\sqrt{p}}\right)=\frac{\sqrt{p}-p}{1-p}\] as expected.
\end{example}

\subsection{Expected duration}

We now consider the expected duration of the gambler's ruin problem with a mirror step. Define $g(x)$ as the expected number of steps that a particle starting at $x$ will eventually reach a position $0$ or $N$. For $0<x<N$, this expected duration satisfies the recurrence relation
    \begin{equation} \label{eq:mirror exp}
        g(x)=\frac{1-p}{2}g(x-1)+\frac{1-p}{2}g(x+1) + pg(N-x) +1, \quad g(0)=0, g(N)=0.
    \end{equation}
We use this recurrence relation to find a closed formula for the expected duration of the game.
\begin{theorem}\label{thm:expected duration mirror}
Consider the generalization of the gambler's ruin problem when we add a mirror step. Then, the expected duration of ending at $0$ or $N$ starting at $x$ is given by
\begin{equation*}
    g(x)=\frac{1}{1-p}x(N-x)
\end{equation*}
whenever we restrict the particle moves by either moving from $x$ to $x-1$ with probability $q_1$, or from $x$ to $x+1$ with probability $q_2$, or jumps to $N-x$ with probability $p$ where $q_1=q_2=\frac{1-p}{2}$.
\end{theorem}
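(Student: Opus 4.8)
The plan is to eliminate the mirror term from \Cref{eq:mirror exp} by exploiting the reflection symmetry $x\mapsto N-x$, thereby reducing the problem to the classical expected-duration recurrence recalled in \Cref{subsec:classical}. First I would establish that $g(x)=g(N-x)$ for all $0\le x\le N$. This follows by the same uniqueness-of-solution argument used in \Cref{lem:mirror probability equal to 1}: setting $\tilde g(x)=g(N-x)$, a direct substitution $y\mapsto N-x$ into \Cref{eq:mirror exp} shows that $\tilde g$ satisfies exactly the recurrence \Cref{eq:mirror exp} together with the boundary conditions $\tilde g(0)=\tilde g(N)=0$; since the linear system of $N-1$ equations in $N-1$ unknowns has a unique solution, $\tilde g=g$. (This symmetry is in any case built into the closed form we are about to verify, so one may skip this step entirely if content to check the formula directly.)

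Second, substituting $g(N-x)=g(x)$ into \Cref{eq:mirror exp} and collecting the $g(x)$ terms gives
\[(1-p)\,g(x)=\frac{1-p}{2}g(x-1)+\frac{1-p}{2}g(x+1)+1,\]
so after dividing by $1-p$,
\[g(x)=\frac12 g(x-1)+\frac12 g(x+1)+\frac{1}{1-p},\qquad g(0)=g(N)=0.\]
This is precisely the classical expected-duration recurrence from \Cref{subsec:classical} with the inhomogeneous term $1$ replaced by $\tfrac{1}{1-p}$. Since the map sending a constant $c$ to the solution of $g(x)=\tfrac12 g(x-1)+\tfrac12 g(x+1)+c$ with zero boundary data is linear, and the case $c=1$ has solution $x(N-x)$, the solution here is $g(x)=\tfrac{1}{1-p}x(N-x)$; uniqueness of the solution to the boundary-value problem completes the argument.

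As a self-contained alternative to the two steps above, I would instead simply verify that $g(x)=\tfrac{1}{1-p}x(N-x)$ satisfies \Cref{eq:mirror exp}: the boundary conditions are immediate, and since $g(N-x)=g(x)$ for this candidate, the elementary identity $(x-1)(N-x+1)+(x+1)(N-x-1)=2x(N-x)-2$ reduces the right-hand side of \Cref{eq:mirror exp} to $\tfrac{1}{1-p}x(N-x)$ once the two copies of $x(N-x)$ are combined with coefficient $\tfrac{p}{1-p}+1=\tfrac{1}{1-p}$.

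I do not expect a genuine obstacle here. The only points requiring care are the appeal to uniqueness of the solution of the finite linear system, so that exhibiting one solution suffices, and the bookkeeping that the mirror term $pg(N-x)$ really does merge cleanly with $g(x)$ in the symmetric case — exactly as in the probability computation leading to \Cref{eq:inhomogeneous eq}.
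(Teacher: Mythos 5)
Your proposal is correct, and your ``self-contained alternative'' --- plugging $h(x)=\tfrac{1}{1-p}x(N-x)$ into \Cref{eq:mirror exp}, checking the boundary values, and using $h(N-x)=h(x)$ together with $(x-1)(N-x+1)+(x+1)(N-x-1)=2x(N-x)-2$ --- is exactly the proof the paper gives: a direct verification that the candidate satisfies the same recurrence and boundary conditions as $g$, with uniqueness of the solution of the finite boundary-value problem invoked (implicitly in the paper, explicitly by you) to conclude $h=g$. Your primary route is genuinely different in structure: you first prove the symmetry $g(x)=g(N-x)$ by the same uniqueness trick as \Cref{lem:mirror probability equal to 1}, then absorb the mirror term to get $g(x)=\tfrac12 g(x-1)+\tfrac12 g(x+1)+\tfrac{1}{1-p}$ and scale the classical solution $x(N-x)$ from \Cref{subsec:classical}. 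This is precisely the strategy the paper uses for the \emph{probability} (via \Cref{eq:inhomogeneous eq}) but not for the duration; its advantage is that it explains where the factor $\tfrac{1}{1-p}$ comes from rather than merely verifying it, and it would let you solve the symmetric mirror model for other inhomogeneous terms without guessing the answer first, at the modest cost of needing the uniqueness/symmetry step up front. Both arguments rest on the same unstated fact --- that the $(N-1)\times(N-1)$ linear system has a unique solution --- so neither is more rigorous than the other on that point; flagging it, as you do, is the only refinement worth adding to the paper's version.
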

Remark: When $p=0$, \Cref{thm:expected duration mirror} recovers the formula for the expected duration of the classical gambler's ruin game.

\begin{proof}
Let $h(x)=\frac{1}{1-p}x(N-x)$ and observe that $h(0)=0, h(N)=0$. We prove that $h(x)$ satisfies the same recurrence relation as $g(x)$. Applying the recurrence from \Cref{eq:mirror exp} to $h(x)$ and simplifying yields 
\begin{align*}
    &\frac{1-p}{2}h(x-1) +\frac{1-p}{2}h(x+1)+ph(N-x)+1\\
    &=  \frac{1-p}{2}\left(\frac{1}{1-p}(x-1)(N-x+1)\right)+\frac{1-p}{2}\left(\frac{1}{1-p}(x+1)(N-x-1)\right)+p\left(\frac{1}{1-p}x(N-x)\right)+1\\
    &=\frac{1}{1-p}x(N-x).
\end{align*}

Hence, $h(x)$ satisfies the following recurrence relation 
\begin{equation*}
    h(x)=\frac{1-p}{2}h(x-1) +\frac{1-p}{2}h(x+1)+ph(N-x)+1, \quad h(0)=0, h(N)=0.
\end{equation*}
Thus, we get that $h(x)=g(x)$ which completes the proof.
\end{proof}

\section{Future Work }\label{sec:future}  

In \Cref{sec:mirror step}, we consider a generalization of the gambler's ruin problem where the particle starts at some point $x$ on a line of length $N$ where $0< x <N$. At each step, the particle either moves to the left by one step with probability $q_1$, moves to the right by one step with probability $q_2$, or moves to $N-x$ with probability $p$ where $0<p<1$ and $q_1+q_2+p=1$. We focus on the case when $q_1=q_2=\frac{1-p}{2}$, and we provide formulas for the probability that the particle ends at $N$ and the expected number of steps to finish the game. Thus, it is an open problem to give formulas for general $q_1, q_2$ and $p$. 

Computational evidence suggests the following conjecture when the probability of moving from $x$ to $x-1$ is the same as the probability of moving from $x$ to $N-x$. 
\begin{conjecture}
Consider the generalization of the gambler's ruin problem when we add a mirror step. If the particle starts at $x=1$, then
\[\lim_{N\to \infty} f_N^{(p)}(1)= \frac{\sqrt{(p+1)(1-3p+4p^2)}-(1-2p)(p+1)}{2p(p+1)}\]
whenever we restrict the particle moves by either moving from $x$ to $x-1$ with probability $q_1$, or from $x$ to $x+1$ with probability $p$, or from $x$ to $N-x$ with probability $q_2$ where $q_1=q_2=\frac{1-p}{2}$.
\end{conjecture}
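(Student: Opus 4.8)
The plan is to follow the strategy behind \Cref{thm:probability mirror}, except that the clean identity of \Cref{lem:mirror probability equal to 1} is no longer available once the left- and right-step probabilities differ. Write $a=\frac{1-p}{2}$ for the (common) probability of the left step and the mirror step and $b=p$ for the right step, so $2a+b=1$, and set $f(x)=f_N^{(p)}(x)$. The defining relation is
\[
f(x)=a\,f(x-1)+b\,f(x+1)+a\,f(N-x),\qquad 0<x<N,\qquad f(0)=0,\ f(N)=1 .
\]
The nonlocal term $f(N-x)$ stops this from being a plain linear recurrence, so the first move is to localize it: write the relation at both $x$ and $N-x$, obtaining a coupled linear system for the pair $\bigl(f(x),f(N-x)\bigr)$ --- equivalently a fourth-order linear recurrence whose solution space I can describe explicitly.

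Feeding the ansatz $f(x)=\lambda^{x}$ into that system gives the palindromic characteristic equation
\[
ab\,\lambda^{4}-(a+b)\,\lambda^{3}+(1+b^{2})\,\lambda^{2}-(a+b)\,\lambda+ab=0,
\]
which the substitution $s=\lambda+\lambda^{-1}$ turns into $ab\,s^{2}-(a+b)\,s+(1+b^{2}-2ab)=0$. One root is $s=2$ (so $\lambda=1$ is a double root), and the other is $s_{2}=\dfrac{1-p+2p^{2}}{(1-p)p}$, producing the reciprocal pair $\lambda_{2}>1$, $\lambda_{2}^{-1}$ with
\[
\lambda_{2}=\frac{1-p+2p^{2}+\sqrt{(1+p)(1-3p+4p^{2})}}{2(1-p)p};
\]
the surd appears precisely because $(1-p)^{2}p^{2}\bigl(s_{2}^{2}-4\bigr)=(1+p)(1-3p+4p^{2})$. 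I would then write the general solution as $f(x)=A\lambda_{2}^{x}+B\lambda_{2}^{-x}+C+Ex$ and impose the consistency condition that the second coordinate of the coupled system genuinely be $f(N-x)$. This kills the linear term ($E=0$, despite the double root at $1$) and forces $B=\rho_{+}\lambda_{2}^{N}A$, where $\rho_{+}=\dfrac{1-a/\lambda_{2}-b\lambda_{2}}{a}$ (here one uses $\rho_{+}\rho_{-}=1$ for the two reciprocal roots, which follows from the product form of the characteristic equation). The boundary conditions $f(0)=0$, $f(N)=1$ then determine $A$ and $C$ --- the associated determinant is nonzero since one checks $\lambda_{2}\neq1$ and $\rho_{+}\neq1$ for all $p\in(0,1)$ --- and one obtains
\[
f_{N}^{(p)}(1)=\frac{(\lambda_{2}-1)\bigl(1-\rho_{+}\lambda_{2}^{N-1}\bigr)}{(1-\rho_{+})\bigl(\lambda_{2}^{N}-1\bigr)} .
\]

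Because $\lambda_{2}>1$, letting $N\to\infty$ discards the $1$'s against the growing powers of $\lambda_{2}$ and leaves $\displaystyle\lim_{N\to\infty}f_{N}^{(p)}(1)=\frac{\rho_{+}(\lambda_{2}-1)}{\lambda_{2}(\rho_{+}-1)}$. The last step is to substitute the explicit $\lambda_{2}$ and $\rho_{+}$ and simplify, and I expect \emph{this} to be the main obstacle: $\rho_{+}$ is itself a rational expression in $\lambda_{2}$, hence in $\sqrt{(1+p)(1-3p+4p^{2})}$, so the reduction requires repeatedly using $\lambda_{2}^{2}=s_{2}\lambda_{2}-1$ to lower degrees and then rationalizing the radical, with several easy places to slip a sign --- exactly the sort of computation where a symbolic-algebra confirmation (in the spirit of the rest of the paper) is worthwhile. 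The payoff is that it collapses to $\dfrac{\sqrt{(p+1)(1-3p+4p^{2})}-(1-2p)(p+1)}{2p(p+1)}$. As consistency checks I would verify $p=\tfrac12$, where $1-2p=0$ and the value is $\tfrac{1}{\sqrt3}$, and $p=\tfrac13$, where $a=b$ forces agreement with \Cref{cor:probability at infinity} at $x=1$, namely $\tfrac{\sqrt3-1}{2}$.
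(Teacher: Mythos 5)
Your proposal targets a statement the paper does not prove: it appears only as a conjecture in \Cref{sec:future}, backed by numerics from \texttt{identify}, so there is no proof of record to compare against — your argument, if written out in full, would upgrade it to a theorem. Your route is the natural extension of \Cref{thm:probability mirror} and \Cref{cor:probability at infinity} to the asymmetric case, and you correctly recognize that \Cref{lem:mirror probability equal to 1} is no longer available; replacing it by the localization trick (coupling $f(x)$ with $f(N-x)$) is exactly what is needed. The steps you sketch all check out: with $a=\tfrac{1-p}{2}$, $b=p$ the coupled system has characteristic polynomial $ab\lambda^{4}-(a+b)\lambda^{3}+(1+b^{2})\lambda^{2}-(a+b)\lambda+ab$; $s=2$ is a root of the quadratic in $s=\lambda+\lambda^{-1}$ precisely because $2a+b=1$, the other root is $s_{2}=\tfrac{1-p+2p^{2}}{p(1-p)}$, and $p^{2}(1-p)^{2}(s_{2}^{2}-4)=(1+p)(1-3p+4p^{2})$; the ratios satisfy $\rho_{+}\rho_{-}=1$; the generalized-eigenvector (linear) pair has the \emph{same} slope in both coordinates, so matching the second coordinate against $f(N-x)$ really does force $E=0$ and $B=\rho_{+}\lambda_{2}^{N}A$; $\rho_{+}=1$ would require $(\lambda_{2}-1)(b\lambda_{2}-a)=0$, which is impossible, so the boundary system is solvable; and your finite-$N$ expression and its limit $\tfrac{\rho_{+}(\lambda_{2}-1)}{\lambda_{2}(\rho_{+}-1)}$ are correct. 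The step you flag as the main obstacle does go through: since $\rho_{+}-1=-\tfrac{(b\lambda_{2}-a)(\lambda_{2}-1)}{a\lambda_{2}}$, the limit collapses to $\tfrac{b\lambda_{2}^{2}-\lambda_{2}+a}{b\lambda_{2}^{2}-a\lambda_{2}}=\tfrac{b^{2}\lambda_{2}+a(a-b)}{(a^{2}+ab+b^{2})\lambda_{2}-ab}$ (using $\lambda_{2}^{2}=s_{2}\lambda_{2}-1$ and $2a=1-b$), and substituting your $\lambda_{2}$ and cross-multiplying against the conjectured value $\tfrac{\sqrt{(1+p)(1-3p+4p^{2})}-(1-2p)(1+p)}{2p(1+p)}$ yields a polynomial identity in $p$ in which both the radical-free parts ($4p^{2}-8p^{3}+8p^{4}+24p^{5}+4p^{6}$ on each side) and the coefficients of the radical ($8p^{3}+8p^{4}$) agree, so the identity holds exactly, not just at your test values $p=\tfrac12$ and $p=\tfrac13$ (which are indeed $\tfrac{1}{\sqrt{3}}$ and $\tfrac{\sqrt{3}-1}{2}$). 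What the paper's numerics buy is only the statement; your method buys, in addition to the limit, a closed form for $f_{N}^{(p)}(1)$ (indeed for $f_{N}^{(p)}(x)$) at finite $N$, which would fit naturally alongside \Cref{thm:probability mirror}.
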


\section*{Acknowledgements}
The author thanks her advisor Dr. Doron Zeilberger for the introduction
to the problem and feedback on an earlier draft. The author was supported by the NSF Graduate Research Fellowship Program under Grant No. 2233066.

\bibliographystyle{plain}
\bibliography{bibliography.bib}

\appendix
\section{Computational Tools for Analyzing the Gambler's Ruin Problem}\label{appendix:maple}
\subsection{Method Descriptions}
There are four text files accompanying this article: \lstinline{GGR.txt, GGR1d.txt, GGR2d.txt,} and \lstinline{GGR1dMirror.txt} which can be found in the GitHub repository \\
\href{https://github.com/marti310/Gamblers-Ruin}{https://github.com/marti310/Gamblers-Ruin}.

In this section, we will describe the functionality of
some of the main procedures. These text files should be saved in the same directory. All procedures were written and tested for Maple 20.

\subsubsection{\lstinline{GGR.txt}}
The \lstinline{GGR.txt} file contains the following main procedures.

\begin{itemize}
\item \lstinline{ProbN(N)}

    Returns a list $L$ of length $N-1$.
    
    This function inputs a positive integer $N$ and computes the probability of ending at $N$ for every $1\leq x \leq N-1$.

    Example:
    
    \begin{lstlisting}
    read `GGR.txt`:
    N:=10;
    ProbN(N);
    \end{lstlisting}
    Output:
    
    \lstinline!{[1/10, 1/5, 3/10, 2/5, 1/2, 3/5, 7/10, 4/5, 9/10]}!
    
    \item \lstinline{ExpN(N)}
    
    Returns a list $L$ of length $N-1$.
    
    This function inputs a positive integer $N$ and computes the expected number of steps of ending at either $0$ or $N$ for every $1\leq x \leq N-1$.
\end{itemize}

\subsubsection{\lstinline{GGR1d.txt}}\label{appendix:ggr1d}
The \lstinline{GGR1d.txt} file contains procedures for the classical gambler's ruin game. In addition, it contains procedures for the $1$-dimensional case. We provide the main procedures.

\begin{itemize}
\item \lstinline{GR1dPG(N,P)}

    Returns a list $L$ of length $N-1$.
    
    This function inputs a positive integer $N$ and a probability table $P$ where $P=[[a_1,p_1],[a_2,p_2],...,[a_r,p_r]]$ and computes the probability of the particle ending at some position $\geq N$ for every $1\leq x \leq N-1$.
    
    Remark: This procedures uses the classical approach of solving for $N-1$ linear equations.

    Example:
    
    \begin{lstlisting}
    read `GGR1d.txt`:
    P:=GR1dPG(10,[[-1,1/2],[1,1/2]]);
    \end{lstlisting}
    Output:
    
    \lstinline!{[1/10, 1/5, 3/10, 2/5, 1/2, 3/5, 7/10, 4/5, 9/10]}!

    \item \lstinline{GR1dLG(N,P)}
    
    Returns a list $L$ of length $N-1$.
    
    This function inputs a positive integer $N$ and a probability table $P$ where $P=[[a_1,p_1],[a_2,p_2],...,[a_r,p_r]]$ and computes the expected number of steps for the particle to end at $\leq 0$ or $\geq N$ for every $1\leq x \leq N-1$.

    Remark: This procedures uses the classical approach of solving for $N-1$ linear equations.

    \item \lstinline{NewGR1dPG(N,P)}
    
    Returns a list $L$ of length $N-1$.
    
    This function inputs a positive integer $N$ and a probability table $P$ where $P=[[a_1,p_1],[a_2,p_2],...,[a_r,p_r]]$ and computes the probability for the particle to end at $\geq N$ for every $1\leq x \leq N-1$.

    Remark: This procedures uses the faster method.

    Example:
    
    \begin{lstlisting}
    read `GGR1d.txt`:
    P:=NewGR1dPG(10,[[-1,1/2],[1,1/2]]);
    \end{lstlisting}
    Output:
    
    \lstinline!{[1/10, 1/5, 3/10, 2/5, 1/2, 3/5, 7/10, 4/5, 9/10]}!
    
     \item \lstinline{NewGR1dLG(N,P)}
    
    Returns a list $L$ of length $N-1$.
    
    This function inputs a positive integer $N$ and a probability table $P$ where $P=[[a_1,p_1],[a_2,p_2],...,[a_r,p_r]]$ and computes the expected number of steps for the particle to end at $\leq 0$ or $\geq N$ for every $1\leq x \leq N-1$.

    Remark: This procedures uses the faster method.
\end{itemize}

\subsubsection{\lstinline{GGR2d.txt}}\label{appendix:ggr2d}
The \lstinline{GGR2d.txt} file contains procedures for the $2$-dimensional gambler's ruin game. We provide the main procedures.

\begin{itemize}
\item \lstinline{GR2dP(M,N,L,U,R,B)}

    Returns an $(M-1) \times (N-1)$ matrix whose entries are linear combinations of L,U,R,B.
    
    This function inputs positive integers $M, N$ and symbols L,U,R,B where L is the left edge, U is the top edges, R is the right edge and B is the bottom edge of the $M\times N$ rectangle, and computes the probability of the particle starting at some point $(a,b)$ and ending on L, U, R or B for every $1\leq a \leq M-1$ and $1\leq b \leq M-1$.

    Remark: This procedures uses the classical approach of solving for $(M-1)\times(N-1)$ linear equations.

    Example:
    
    \begin{lstlisting}
    read `GGR2d.txt`:
    GR2dP(3,3,L,U,R,B);
    \end{lstlisting}
    Output:
    
    \lstinline!{[[(3L)/8 + (3B)/8 + U/8 + R/8, (3L)/8 + B/8 + (3U)/8 + R/8], [L/8 + (3B)/8 + U/8 + (3R)/8, L/8 + B/8 + (3U)/8 + (3R)/8]]}!

    \item \lstinline{GR2dL(M,N)}
    
    Returns an $(M-1)\times (N-1)$ matrix $M$.
    
    This function inputs positive integers $M, N$ and computes the expected number of steps of the particle starting at some point $(a,b)$ and ending on L, U, R or B for every $1\leq a \leq M-1$ and $1\leq b \leq M-1$.

     Remark: This procedures uses the classical approach of solving for $(M-1)\times(N-1)$ linear equations.
    
    Example:
    
    \begin{lstlisting}
    read `GGR2d.txt`:
    GR2dL(3,3);
    \end{lstlisting}
    Output:
    
    \lstinline!{[[2, 2], [2, 2]]}!

    \item \lstinline{NewGR2dP(M,N,L,U,R,B)}

    Returns an $(M-1) \times (N-1)$ matrix whose entries are linear combinations of L,U,R,B.
    
    This function inputs positive integers $M, N$ and symbols L,U,R,B where L is the left edge, U is the top edges, R is the right edge and B is the bottom edge of the $M\times N$ rectangle, and computes the probability of the particle starting at some point $(a,b)$ and ending on L, U, R or B for every $1\leq a \leq M-1$ and $1\leq b \leq M-1$.

    Remark: This procedures uses the faster method. 
    
    Example:
    
    \begin{lstlisting}
    read `GGR2d.txt`:
    NewGR2dP(3,3,L,U,R,B);
    \end{lstlisting}
    Output:
    
    \lstinline!{[[(3L)/8 + (3B)/8 + U/8 + R/8, (3L)/8 + B/8 + (3U)/8 + R/8], [L/8 + (3B)/8 + U/8 + (3R)/8, L/8 + B/8 + (3U)/8 + (3R)/8]]}!

    \item \lstinline{NewGR2dL(M,N)}
    
    Returns an $(M-1)\times (N-1)$ matrix $M$.
    
    This function inputs positive integers $M, N$ and computes the expected number of steps of the particle starting at some point $(a,b)$ and ending on L, U, R or B for every $1\leq a \leq M-1$ and $1\leq b \leq M-1$.

     Remark: This procedures uses the faster method.
    
    Example:
    
    \begin{lstlisting}
    read `GGR2d.txt`:
    NewGR2dL(3,3);
    \end{lstlisting}
    Output:
    
    \lstinline!{[[2, 2], [2, 2]]}!

      \item \lstinline{NewGR2dPG(M,N,L,U,R,B,P)}

    Returns an $(M-1) \times (N-1)$ matrix whose entries are linear combinations of L,U,R,B.
    
    This function inputs positive integers $M, N$, symbols L,U,R,B where L is the left edge, U is the top edges, R is the right edge and B is the bottom edge of the $M\times N$ rectangle, and a probability table $P=[p_L,p_U,p_R,p_B]$ such that the particle moves left by one step with probability $p_L$, or moves up by one step with probability $p_U$, or moves right by one step with probability $p_R$, or moves down by one step with probability $p_B$, and computes the probability of the particle starting at some point $(a,b)$ and ending on L, U, R or B for every $1\leq a \leq M-1$ and $1\leq b \leq M-1$.

    Remark: This procedures uses the faster method. 
    
    Example:
    
    \begin{lstlisting}
    read `GGR2d.txt`:
    NewGR2dPG(3,3,L,U,R,B,[1/4,1/4,1/4,1/4]);
    \end{lstlisting}
    Output:
    
    \lstinline!{[[R/8 + (3L)/8 + U/8 + (3B)/8, R/8 + (3L)/8 + (3U)/8 + B/8], [(3R)/8 + L/8 + U/8 + (3B)/8, (3R)/8 + L/8 + (3U)/8 + B/8]]}!
%%%%%%%%%%%%%%%%%%%%%%%%%%%%%%%%%%%%%%%
    \item \lstinline{NewGR2dLG(M,N,P)}

    Returns an $(M-1) \times (N-1)$ matrix.
    
    This function inputs positive integers $M, N$, and a probability table $P=[p_L,p_U,p_R,p_B]$ such that the particle moves left by one step with probability $p_L$, or moves up by one step with probability $p_U$, or moves right by one step with probability $p_R$, or moves down by one step with probability $p_B$, and computes the expected number of steps of the particle starting at some point $(a,b)$ and ending on L, U, R or B for every $1\leq a \leq M-1$ and $1\leq b \leq M-1$.

    Remark: This procedures uses the faster method. 
    
    Example:
    
    \begin{lstlisting}
    read `GGR2d.txt`:
    NewGR2dLG(3,3,[1/4,1/4,1/4,1/4]);
    \end{lstlisting}
    Output:
    
    \lstinline!{[[2, 2], [2, 2]]}!

     \item \lstinline{KmetPetkovsek(N)}

    Returns an $(N-1) \times (N-1)$ matrix.
    
    This function inputs a positive integer $N$ and implements Kmet and Petkovsek's formula for the expected duration of the 2-dimensional gambler's ruin game for the $M=N$ case. 
    
    Example:
    
    \begin{lstlisting}
    read `GGR2d.txt`:
    KmetPetkovsek(3);
    \end{lstlisting}
    Output:
    
    \lstinline!{[[2, 2], [2, 2]]}!
\end{itemize}

\subsubsection{\lstinline{GGR1dMirror.txt}}\label{appendix:mirror}
The \lstinline{GGR1dMirror.txt} file contains the following main procedures for the new generalization of gambler's ruin where we add a third step.

\begin{itemize}
\item \lstinline{ProbN2(N,P)}

    Returns a list $L$ of length $N$.
    
    This function inputs a positive integer $N$ and a probability table $P$ where $P=[p_1,p_2,p_3]$ where $p_1+p_2+p_3=1$ and outputs a list $L$ of length $N$ such that $L[x]$ is the probability of the particle ending at $N$ when it starts at $x$ where the particle can move from $x$ to $x-1$ with probability $p_1$, or $x$ to $x+1$ with probability $p_2$, or $x$ to $N-x$ with probability $p_3$.

    Example:
    
    \begin{lstlisting}
    read `GGR1dMirror.txt`:
    ProbN2(5,[1/3,1/3,1/3]);
    \end{lstlisting}
    Output:
    
    \lstinline!{[7/19, 9/19, 10/19, 12/19, 1]}!

    \item \lstinline{ExpN2(N,P)}

    Returns a list $L$ of length $N$.
    
    This function inputs a positive integer $N$ and a probability table $P$ where $P=[p_1,p_2,p_3]$ where $p_1+p_2+p_3=1$ and outputs a list $L$ of length $N$ such that $L[x]$ is the expected number of steps that the particle takes to end at $N$ or $0$ when it starts at $x$ where the particle can move from $x$ to $x-1$ with probability $p_1$, or $x$ to $x+1$ with probability $p_2$, or $x$ to $N-x$ with probability $p_3$.

    Example:
    
    \begin{lstlisting}
    read `GGR1dMirror.txt`:
    ExpN2(5,[1/3,1/3,1/3]);
    \end{lstlisting}
    Output:
    
    \lstinline!{[6, 9, 9, 6, 0]}!

     \item \lstinline{Lk(p,x,N)}

    Returns a number.
    
    This function inputs a probability value $p, 0<p<1$, and positive integers $x,N$ where $0<x<N$ and outputs the exact probability of the particle ending at $N$ when it starts at location $x$ where with probability $(1-p)/2$ the particle moves to $x-1$, with probability $(1-p)/2$ the particle moves to $x+1$ and with probability $p$ the particle moves to $N-x$.

    Example:
    
    \begin{lstlisting}
    read `GGR1dMirror.txt`:
    Lk(1/3,4,100);
    \end{lstlisting}
    Output:
    
    \lstinline!{0.4974226119}!
    
\end{itemize}

\end{document}